\documentclass[a4paper,10pt]{article}
\usepackage{amsfonts}
\usepackage{amssymb}
\usepackage{amsthm}
\usepackage{cite}

\usepackage{graphicx}
\usepackage{amsfonts}
\usepackage{amssymb}
\usepackage{amsthm}
\usepackage{newlfont}
\usepackage{fancyhdr}
\usepackage{latexsym,amssymb,euscript}
\usepackage{rotating}
\usepackage{newlfont}
\vfuzz2pt 
\input amssym.def
\input amssym
\def\dj{d\kern-0.4em\char"16\kern-0.1em}


 \newtheorem{thm}{Theorem}
 \newtheorem{cor}[thm]{Corollary}
 
 \newtheorem{prop}[thm]{Proposition}
\theoremstyle{definition}
 \newtheorem{defn}[thm]{Definition}
\newtheorem{exm}[thm]{Example}
 \newtheorem{rem}[thm]{Remark}
\title{ Shellability of complexes of directed trees }
\author{{ Du\v sko Joji\' c} \\
Faculty of Science, University of Banja Luka\\78 000 Banja Luka,
Bosnia and Herzegovina\\e-mail: ducci68@blic.net}

\date{}
\begin{document}
\maketitle
\begin{abstract}
 The question of shellability of
complexes of directed trees was asked by R. Stanley. D. Kozlov
showed that the existence of a complete source in a directed graph
provides a shelling of its complex of directed trees. We will show
that this property gives a shelling that is straightforward in
some sense. Among the simplicial polytopes, only the
crosspolytopes allow such a shelling. Furthermore, we show that
the complex of directed trees of a complete double directed graph
is a union of suitable spheres. We also investigate shellability
of the maximal pure skeleton of a complex of directed trees. Also,
we prove that the complexes of directed trees of a directed graph
which is essentially a tree is vertex-decomposable. For these
complexes we describe the set of generating facets.

\end{abstract}

\section{Introduction}

A \textit{directed tree} with a \emph{root} $r$ is an acyclic
directed graph $T=(V(T),E(T))$ such that for every $x\in V(T)$
there exists a unique path from $r$ to $x$. A \textit{directed
forest} is a family of disjoint directed trees. We say that a
vertex $y$ is \emph{below} vertex $x$ in a directed tree $T$ if
there exists a unique path from $x$ to $y$. In this paper we write
$\overrightarrow{xy}$ for a directed edge
   from $x$ to
  $y$.
\begin{defn} Let $D$ be a directed graph.
The vertices of the \emph{complex of directed trees} $\Delta(D)$
are oriented edges of $D$. The faces of $\Delta(D)$ are all directed
forests that are subgraphs of $D$.
\end{defn}
The investigation of complexes of directed trees was initiated by
D. Kozlov in \cite{Kozlov-Trees}. The complex of directed trees of
a graph $G$ is recognized in \cite{Ch-Jos} as a discrete Morse
complex of this graph (the authors treat graph as a
$1$-dimensional complex). Directed forests of $G$ correspond with
Morse matchings on $G$. Complexes of directed trees are also
studied in \cite{Eng} and \cite{Koz-chain}.

A $d$-dimensional simplicial complex is \textit{\textit{pure}} if
every simplex of dimension less than $d$ is a face of some
$d$-simplex. For further definitions about simplicial complexes
and other topological concepts used in this paper we refer the
reader to the textbook \cite{MunAT}.

\begin{defn}\label{D:shell}
A simplicial complex $\Delta$ is \textit{shellable} if $\Delta$ is
pure and there exists a linear ordering (\textit{\textit{shelling
order}}) $F_1, F_2,\ldots, F_k$ of maximal faces (\textit{facets})
of $\Delta$ such that for all $i < j\leqslant k$, there exist some
$l < j$ and a vertex $v$ of $F_j$, such that
\begin{equation}\label{e:definshell}
F_i\cap F_j \subseteq F_l\cap F_j
=F_j\setminus\{v\}.\end{equation}
\end{defn}
  For a fixed shelling order $F_1, F_2,\ldots, F_k$ of $\Delta$,
  the \textit{restriction} $\mathcal{R}(F_j)$ of the facet $F_j$ is
  defined by:
$$\mathcal{R}(F_j) = \{v \textrm{ is a vertex of }F_j :
F_j \setminus \{v\}\subset F_i\textrm{ for some }1 \leqslant i <
j\}.$$

 Geometrically, if we build up $\Delta$ from its facets
according to the shelling order, then $\mathcal{R}(F_j)$ is the unique
minimal new face added at the $j$-th step. The \textit{type} of
the facet $F_j$ in the given shelling order is the cardinality of
$\mathcal{R}(F_j)$, that is, $type(F_j) = |\mathcal{R}(F_j) |$.

\noindent  For a $d$-dimensional simplicial complex $\Delta$ we
denote the number of $i$-dimensional faces of $ \Delta $ by $f_i$,
and call $f(\Delta) = (f_{-1},f_0, f_1,\ldots , f_d)$  the
$f$-\textit{vector}. A new invariant, the $h$-\textit{vector} of
$d$-dimensional complex $\Delta$ is $h(\Delta) = (h_0, h_1,\ldots,
h_d, h_{d+1})$ defined by the formula
$$h_k=\sum_{i=0}^{k}(-1)^{k-i}{d+1-i\choose d+1-k}f_{i-1}.$$

\noindent If a simplicial complex $\Delta$ is shellable, then
$$h_k(\Delta)=|\{F\textrm{ is a facet of }\Delta: type(F)=k\}|$$ is
an important combinatorial interpretation of $h(\Delta)$. This
interpretation of the $h$-vector was of great significance in the
proof of the upper-bound theorem and in the characterization of
$f$-vectors of simplicial polytopes (see chapter 8 in
\cite{Zi-books}).

 If a $d$-dimensional simplicial complex $\Delta$ is
shellable, then $ \Delta$ is homotopy equivalent to a wedge of
$h_d$ spheres of dimension $d$. A set of maximal simplices from a
simplicial complex $\Delta$ is a set of \textit{generating
simplices} if the removal of their interiors makes $\Delta$
contractible.

For a given shelling order of a complex $\Delta$ we have that
$$\{F\in\Delta:F\textrm{ is a facet and } \mathcal{R}(F)=F\}$$ is a set of
generating facets of $\Delta$. Note that a facet $F$ is in this
set if and only if
\begin{equation}\label{genco}
\forall v \in F\textrm{ there exists a facet }
F'\textrm{ before }F\textrm{ such that }F\cap F'= F\setminus\{v\}.
\end{equation}

\noindent The concept of shellability for nonpure complexes is
introduced in \cite{BjWa}. In the definition of shellability of
nonpure complexes we just drop the requirement of purity from
Definition \ref{D:shell}.

For a facet $F$ of a shellable nonpure complex we can define its
restriction $\mathcal{R}(F)$ as before. For nonpure complexes the
definitions of $f$-vector and $h$-vector are extended for double indexed
arrays. For a nonpure complex $\Delta$ let
$$f_{i,j}(\Delta)=|\{A\in \Delta:|A|=j \textrm{, }
i= max\{|T|:A\subseteq T\subseteq\Delta\} \}|,$$
$$\textrm{and }
h_{i,j}(\Delta)=\sum_{k=0}^j(-1)^{j-k}{i-k\choose j-k}f_{i,k}.$$

The above defined arrays are called the $f$-\textit{triangle} and
the $h$-\textit{triangle} of $\Delta$. If $\Delta$ is a shellable
complex, we have the following combinatorial interpretation of the
$h$-triangle: $h_{i,j}(\Delta)=|\{F\textrm{ a facet of }\Delta:
|F|=i, |\mathcal{R}(F)|=j\}|.$

If a nonpure simplicial complex $\Delta$ is shellable, we know
that $\Delta$ has a homotopy type of the wedge of spheres,
consisting of $h_{j,j}$ copies of the $(j-1)$-spheres (see Theorem
4.1 in \cite{BjWa}). The conditions described in (\ref{genco})
help us to identify a generating set of a nonpure shellable
complex.\\ More information about shellable complexes can be found
in \cite{Bj:TM},
  \cite{BjSCMP} and
\cite{BjWa}.
\section{Shelling of graphs
 with a complete source}

  A vertex $x$ is a \textit{\textit{complete source}}
  of a directed graph $D$ if $\overrightarrow{xy} \in
  E(D)$ for all $y\in V(D)\setminus\{x\}$.
 D. Kozlov proved (Theorem 3.1 in \cite{Kozlov-Trees}) that if
 a directed graph $D$ has a complete source, then the complex
 $\Delta(D)$ is shellable. He used a version of shelling
 described in the following remark.
\begin{rem}\label{shellofcomplete} Let $\Gamma$ be a
 simplicial complex. Assume that we can partition all of
  the facets of $\Gamma$ into  sets
  $\mathcal{F}_0, \mathcal{F}_1, \mathcal{F}_2, \ldots, \mathcal{F}_{m}$
  such that the following holds:
  $$|\mathcal{F}_0|=1; \textrm{ for all } i\leqslant j,
  \textrm{ and for different facets } F\in \mathcal{F}_i,
  F'\in \mathcal{F}_j,$$

\vspace{-.75cm}

   \begin{equation}\label{shellpro}\textrm{ there exist }k<j,
   \textrm{ a facet } F''\in \mathcal{F}_k\textrm{, }
\textrm{ and a vertex } v\in F'
    \end{equation}

\vspace{-.55cm}

$$  \textrm{ such that }
F\cap F'\subseteq F''\cap F' =F'\setminus \{v\}.$$

\noindent In that case any linear order that refines the above
partition $\mathcal{F}_0, \mathcal{F}_1, \mathcal{F}_2, \ldots,
\mathcal{F}_{m}$ (for $i<j$ we list facets from $ \mathcal{F}_i$
before facets from $\mathcal{F}_j$)
 is a shelling of $\Gamma$.

 \noindent If $T$ is a directed tree and $v\in V(T)$ let $d_T(v)$ denote
 the outdegree of $v$, i.e.,
 $$d_T(v)=|\{x\in V(T):\overrightarrow{vx}\in E(T)\}|.$$
 In the proof of Theorem 3.1 in \cite{Kozlov-Trees}, the facets
 of $\Delta(D)$ are ordered by their degree sequences,
  i.e., trees $T$ and $T'$ are in the same class if and
  only if $d_T(v)=d_{T'}(v)$ for all $v\in V(D)$. Substantially,
  the facets of $\Delta(D)$ are classified by considering the out-degree
  of the complete source.
\end{rem}

Here we consider a directed graph $D$ with a complete source $c$
 and detect some nice properties of a
shelling
 described in the above remark.\\ If $|V(D)|=n$ for $i=0,1,\ldots,n-1$,
 we set
$$\mathcal{F}_i=\{T\textrm{ a facet in }\Delta(D): d_T(c)=n-i-1\}.$$
In the same manner as in the proof of Theorem 3.1 in
\cite{Kozlov-Trees} we can verify that the partition
$\mathcal{F}_0, \mathcal{F}_1, \mathcal{F}_2, \ldots,
\mathcal{F}_{n-1}$ fulfills the condition described in Remark
\ref{shellofcomplete}. Namely, if $d_T(c)\geqslant d_{T'}(c)$ and
$T\neq T'$, then there exists an edge $\overrightarrow{xy}\in
T'\setminus T$ such that $x\neq c$. We define
$$T''=T' \setminus\{\overrightarrow{xy}\}
\cup\{\overrightarrow{cy}\} \textrm{ if the vertex }
 c\textrm{ is not below } y \textrm{ in } T'
  \textrm{;\hspace{.3cm} or }$$
$$T''=T' \setminus\{\overrightarrow{xy}\}
 \cup\{\overrightarrow{cr}\} \textrm{ if } c\textrm{ is below } y
  \textrm{ in } T' \textrm{ and } r\textrm{ is the root of }T'.$$
In both cases simplices $T,T',T''$ and the vertex
$\overrightarrow{xy}$ satisfy condition described in
(\ref{shellpro}).

\noindent Furthermore, for a facet $T\in\Delta(D) $ the unique new
face for $T$ in the shelling order defined above is
$\mathcal{R}(T)= \{\overrightarrow{xy}\in T: x\neq c\}$.
Therefore, the type of $T$ is $type(T)=n-1-d_T(c)$, and we obtain
that
$$h_i(\Delta(D))=|\mathcal{F}_i|=
|\{T\textrm{ is a facet of }\Delta(D) :d_T(c)=n-i-1\}|. $$
\begin{cor} Let $G_n$ be the complete directed graph on $n$
vertices. Then, for all $k=0,1,\ldots,n-1$ we have
$$h_k(\Delta(G_n))={n-1\choose k}(n-1)^k.$$
\end{cor}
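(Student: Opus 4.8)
The plan is to reduce the statement to a single enumeration problem, and then solve that problem with the degree‑refined Cayley formula. Every vertex of $G_n$ is a complete source, so fix one, say $c$. By the computation carried out just before the Corollary, $h_k(\Delta(G_n)) = |\mathcal{F}_k| = \#\{T \text{ a facet of } \Delta(G_n) : d_T(c) = n-1-k\}$. Thus it suffices to count, for each value $m$, the facets $T$ of $\Delta(G_n)$ with $d_T(c) = m$, and then substitute $m = n-1-k$.

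First I would pin down the facets. Since $G_n$ contains both orientations of every edge, any directed forest with at least two components can be enlarged by inserting a directed edge from a vertex of one tree to the root of another; this creates neither a directed cycle nor a second in‑edge, so the enlargement is still a directed forest. Hence the facets of $\Delta(G_n)$ are exactly the spanning directed trees (out‑arborescences) on the vertex set $[n]$, with an arbitrary vertex allowed as root. I then record each facet $T$ as a pair $(\tau,\rho)$, where $\tau$ is the underlying labelled tree on $[n]$ and $\rho$ is the chosen root, and note the orientation bookkeeping identity: $d_T(c) = \deg_\tau(c)$ when $\rho = c$, while $d_T(c) = \deg_\tau(c) - 1$ when $\rho \neq c$, because in the latter case exactly one edge incident to $c$ (the one toward the parent of $c$) points into $c$.

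Next I bring in the degree‑refined Cayley formula: the number of labelled trees on $[n]$ with degree sequence $(d_1,\dots,d_n)$ is $\binom{n-2}{d_1-1,\dots,d_n-1}$, so by the multinomial theorem $\sum_{\tau}\prod_i y_i^{\deg_\tau(i)-1} = (y_1+\cdots+y_n)^{n-2}$. Setting $y_c = y$ and $y_i = 1$ for $i\neq c$ gives $\sum_\tau y^{\deg_\tau(c)-1} = (n-1+y)^{n-2}$. Summing the facet weight $y^{d_T(c)}$ over the $n$ choices of root for each $\tau$ and applying the identity from the previous paragraph yields
$$\sum_{T\ \mathrm{facet}} y^{\,d_T(c)} \;=\; \sum_\tau\bigl(y^{\deg_\tau(c)} + (n-1)\,y^{\deg_\tau(c)-1}\bigr) \;=\; (y+n-1)\sum_\tau y^{\deg_\tau(c)-1} \;=\; (n-1+y)^{\,n-1}.$$
Reading off the coefficient of $y^{\,n-1-k}$ gives $\#\{T : d_T(c) = n-1-k\} = \binom{n-1}{n-1-k}(n-1)^{k} = \binom{n-1}{k}(n-1)^k$, which by the first paragraph equals $h_k(\Delta(G_n))$.

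The step I expect to carry the actual content — the main obstacle — is the orientation bookkeeping: correctly relating the out‑degree of $c$ in a rooted tree to its degree in the underlying tree over all choices of root, and thereby promoting the classical unrooted degree generating function to the rooted one. Everything else is either quoted (the $h$‑vector interpretation established above) or is the standard Prüfer‑sequence count. As a sanity check I would verify that putting $y=1$ recovers $\sum_k h_k(\Delta(G_n)) = n^{n-1}$, the total number of spanning out‑arborescences of $G_n$, in agreement with $\sum_k\binom{n-1}{k}(n-1)^k = n^{n-1}$.
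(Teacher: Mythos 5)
Your proposal is correct and follows the same route the paper intends: it invokes the identity $h_k(\Delta(G_n))=|\{T:\ d_T(c)=n-1-k\}|$ established immediately before the Corollary, and then supplies the enumeration (which the paper leaves to the reader) via the degree-refined Cayley formula, with the root/orientation bookkeeping and the sanity check $\sum_k h_k=n^{n-1}$ both handled correctly.
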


\begin{rem}\label{R:propertiesofshell}
If a directed graph $D$ has a complete source, then the shelling of
$\Delta(D)$ is straightforward in the following sense:
\begin{itemize}
  \item [(1)] We start the shelling with an appropriate facet $F_0$
  and let $\mathcal{F}_0=\{F_0\}$.
\item [(2)] When we order all of the facets from
$\mathcal{F}_{i-1}$, let $\mathcal{F}_{i}$ denote the set of all
 facets of $\Delta(D)\setminus
 (\mathcal{F}_{0} \cup\cdots\cup\mathcal{F}_{i-1})$ that are
 neighborly (share a common ridget)
 to a
simplex from $\mathcal{F}_{i-1}$. \item[(3)] We continue
shelling of $\Delta(D)$ by arranging simplices from
$\mathcal{F}_i$ in an arbitrary order.
\end{itemize}

\begin{itemize}
\item [(4)] In this shelling order, for any facet $F$ we have that
$type(F)=i \Leftrightarrow F\in \mathcal{F}_i .$
\end{itemize}
\end{rem}
It may be interesting to find more examples of simplicial
complexes that allow a shelling with the properties $(1)$--$(4)$ from
the above remark.
\begin{exm}
Let $D_n$ be the directed graph with $V(D_n)=[n]$ and
$$E(D_n)=\{\overrightarrow{1i}:i\in [n], i\neq 1\}\cup
 \{\overrightarrow{2j}:j\in[n], j\neq 2\}.$$
 It  is easy to see that $\Delta(D_n)$ is combinatorially
 equivalent to
 the boundary of
 the $(n-1)$-dimensional crosspolytope.
\end{exm}
\begin{thm}
The only simplicial $d$-dimensional polytope whose boundary admit
a shelling as those described in Remark 4 is the crosspolytope.
\end{thm}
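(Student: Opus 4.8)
The plan is to read the hypothesis entirely through the dual graph of $\partial P$ and then to reconstruct the whole boundary complex class by class. Let $P$ be a simplicial $d$-polytope and $\Delta=\partial P$, so the facets of $\Delta$ are $(d-1)$-simplices and each ridge (codimension-one face) lies in exactly two facets; hence the dual graph $\Gamma$ of $\Delta$ (vertices = facets, edges = pairs sharing a ridge) is $d$-regular and, for $d\ge 2$, connected. For a shelling as in Remark~\ref{R:propertiesofshell} with first facet $F_0$, the class $\mathcal F_i$ is precisely the set of facets at distance $i$ from $F_0$ in $\Gamma$. The first step is to show that no two facets of the same class are adjacent: if $F,F'\in\mathcal F_i$ shared a ridge $\rho$, then in the admissible order listing $F$ immediately before $F'$ the single vertex $F'\setminus\rho$ lies in $\mathcal R(F')$, while in the admissible order listing $F'$ before all of its $\mathcal F_i$-neighbours it does not; since property $(4)$ forces $|\mathcal R(F')|=i$ in \emph{every} admissible order, this is a contradiction. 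Consequently each neighbour of a facet $F\in\mathcal F_i$ lies in $\mathcal F_{i-1}$ or $\mathcal F_{i+1}$, and for any admissible order $\mathcal R(F)$ is exactly the set of vertices of $F$ opposite a ridge shared with a facet of $\mathcal F_{i-1}$; so $|\mathcal R(F)|=i$ equals the number of neighbours of $F$ in $\mathcal F_{i-1}$, and $F$ has exactly $d-i$ neighbours in $\mathcal F_{i+1}$.

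Counting the edges between $\mathcal F_{i-1}$ and $\mathcal F_i$ from both sides yields $i\,|\mathcal F_i|=(d-i+1)\,|\mathcal F_{i-1}|$, which together with $|\mathcal F_0|=1$ forces $h_i(\Delta)=|\mathcal F_i|={d\choose i}$ for $0\le i\le d$ (and $\mathcal F_i=\emptyset$ for $i>d$). In particular $\Delta$ has $\sum_i{d\choose i}=2^d$ facets and, since $h_1=f_0-d$, exactly $2d$ vertices. Now write $F_0=\{v_1,\dots,v_d\}$; the facet sharing the ridge $F_0\setminus\{v_j\}$ with $F_0$ has the form $G_j=(F_0\setminus\{v_j\})\cup\{w_j\}$, and the $w_j$ are pairwise distinct (otherwise two of the $G_j$ would meet in a ridge, contrary to the previous paragraph). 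Hence the vertex set of $\Delta$ is $\{v_1,\dots,v_d,w_1,\dots,w_d\}$; call a $d$-set of the form $T_J:=\{v_j:j\notin J\}\cup\{w_j:j\in J\}$, $J\subseteq[d]$, a \emph{transversal}.

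The heart of the proof is the claim, proved by induction on $i$, that $\mathcal F_i=\{T_J:|J|=i\}$; the case $i=1$ is the statement $\mathcal F_1=\{G_j\}=\{T_{\{j\}}\}$, already recorded. Assuming the claim for all $k<i$ (so $i\ge 2$), take $H\in\mathcal F_i$; it is adjacent to some $T_{J'}\in\mathcal F_{i-1}$ with $|J'|=i-1$, so $H=(T_{J'}\setminus\{u\})\cup\{x\}$ is the second facet on the ridge $T_{J'}\setminus\{u\}$. If $u=w_j$ with $j\in J'$, that second facet is $T_{J'\setminus\{j\}}\in\mathcal F_{i-2}$, contradicting $H\in\mathcal F_i$; hence $u=v_j$ with $j\notin J'$. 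For such a ridge the only possibilities for $x$ are: $v_j$ (recovering $T_{J'}$), $w_j$ (giving $T_{J'\cup\{j\}}$), $v_k$ with $k\in J'$, or $w_k$ with $k\notin J'\cup\{j\}$. A set $(T_{J'}\setminus\{v_j\})\cup\{v_k\}$ with $k\in J'$ cannot be a facet, since it would be a third facet on the ridge $T_{J'\setminus\{k\}}\setminus\{v_j\}$, whose two facets $T_{J'\setminus\{k\}}$ and $T_{(J'\setminus\{k\})\cup\{j\}}$ are already known by the inductive hypothesis; and a set $(T_{J'}\setminus\{v_j\})\cup\{w_k\}$ with $k\notin J'\cup\{j\}$ has $T_{J'}$ as its only neighbour in $\mathcal F_{i-1}$ (a short intersection count), so by the downward-degree property it cannot lie in $\mathcal F_i$ when $i\ge 2$. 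Therefore $H=T_{J'\cup\{j\}}$ is a transversal with $|J'\cup\{j\}|=i$; as $|\mathcal F_i|={d\choose i}$ equals the number of such transversals, the inclusion $\mathcal F_i\subseteq\{T_J:|J|=i\}$ is an equality. It follows that the facets of $\Delta$ are exactly the $2^d$ transversals, i.e. $\Delta$ is the boundary complex of the $d$-dimensional crosspolytope; since a simplicial polytope is determined up to combinatorial equivalence by its boundary complex, $P$ is combinatorially a crosspolytope. (The crosspolytope does carry such a shelling, its dual graph being the bipartite cube graph, so it is indeed the unique example.)

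The only genuinely technical point is the case analysis in the inductive step — eliminating the two families of \emph{non-transversal} second facets — and this I expect to be the main obstacle, although each case reduces to an elementary computation (a ridge-uniqueness argument in one case, a one-inequality count of common neighbours in the other). A more conceptual endgame would be to strengthen the first two paragraphs into the statement that $\Gamma$ is isomorphic to the hypercube graph $Q_d$ and then to invoke the Blind--Mani theorem, in Kalai's formulation, that a simple polytope is determined by its graph, concluding that the dual $P^{\ast}$ is the $d$-cube; however, identifying $\Gamma$ with $Q_d$ needs essentially the same layer-by-layer bookkeeping, so I would present the direct reconstruction above.
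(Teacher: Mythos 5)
Your proof is correct and follows essentially the same route as the paper: fix the first facet $\{v_1,\dots,v_d\}$, identify the $2d$ vertices via the type-$1$ facets, show by induction that $\mathcal{F}_k$ consists exactly of the transversals with $k$ of the $w$'s, and use the count $(d-k+1)|\mathcal{F}_{k-1}|=k|\mathcal{F}_k|$ together with the uniqueness of the minimal new face. Your version is somewhat more careful in the inductive step — it explicitly rules out the second facet on a ridge being $(T_{J'}\setminus\{v_j\})\cup\{v_k\}$, a case the paper's proof passes over implicitly — but the overall argument is the same.
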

\begin{proof}
Assume that $P$ is a simplicial $d$-polytope with desired
shelling. We identify a facet of $P$ with its set of vertices. Let
$F_0=\{v_1,v_2,\ldots,v_d\}$ be the first facet in this shelling.

 Let $w_i$ denote the unique new vertex of the facet of
$P$ that contains $(d-2)$-dimensional simplex
$F_0\setminus\{v_i\}$. All of the facets of $P$ whose type is $1$
belong to $\mathcal{F}_1$ and therefore have the form
$F_0\setminus\{v_i\}\cup \{w_i\}$. We can conclude that the
set of the vertices of $P$ is
 $V(P)=\{v_1,v_2,\ldots,v_d,w_1,w_2,\ldots,w_d\}.$

For any $S\subseteq [d]$ we consider the $(d-1)$-simplex
$$F_S=conv\left(\{v_i:i\notin S\}\cup\{w_j:j\in S\}\right).$$
We do not know that $F_S$ is a facet of $P$, but we use
 induction on $k$ to show that
 \begin{equation}\label{F_i}
 \mathcal{F}_k=\{F_S: S\subseteq [d], |S|=k\}.
 \end{equation}
Assume that the above statement holds for all $t\leqslant k-1$.
Let $F\in \mathcal{F}_{k}$ be a facet (yet not listed) of $P$ that
shares a common ridget with a facet $\bar{F}$ from
$\mathcal{F}_{k-1}$.

 From the inductive hypothesis we have $\bar{F}=F_S$
 (for $S\subset[n], |S|=k-1$) and
 $F=F_S\setminus \{v_i\} \cup \{w_j\}$ for $i,j \notin S$.
 If $i\neq j$ then the edge $\{v_jw_j\}$ and the
 $(k-1)$-simplex $\{w_s:s\in S\cup\{j\}\}$
 are two different minimal
 new faces that $F$ contributes in the shelling of $P$, which is
  impossible.
Therefore, we can conclude that $i=j$, and
$F=F_S\setminus\{v_i\}\cup\{w_i\}=F_{S\cup\{i\}}.$

\noindent We have that any of the facets that belong to
$\mathcal{F}_{k}$ has the form described in (\ref{F_i}). All of
the facets from $\mathcal{F}_{k}$ can be
 listed in an arbitrary order and any of them has the type $k$.
  Therefore, we conclude that two facets from $\mathcal{F}_k$
   cannot share the same ridget, and we obtain that
    $$(d-k+1)|\mathcal{F}_{k-1}|=k|\mathcal{F}_{k}|.$$
    The inductive assumption and the above equations
complete the proof of (\ref{F_i}). So, we may conclude that
$P$ is
combinatorially equivalent with $d$-dimensional crosspolytope.

\end{proof}
If a directed graph $D$ has a complete source $c$ then the complex
$\Delta(D)$ is homotopy equivalent to a wedge of the spheres. In
\cite{Kozlov-Trees}, D. Kozlov describes generating facets of
 $\Delta(D)$ as rooted trees of $D$ having complete source $c$ as a leaf.

  Here we study the combinatorics of the spheres in $\Delta(D)$ when
  $D$ has a complete source.
  For each tree $T$ that is a generating facet we associate
  a sphere
  $S_T\subset \Delta(D)$
  that contains $T$ and describe the combinatorial type of $S_T$.

 We consider a directed graph $D$ with $n$ vertices. Assume that $c$
is a complete source of $D$. Let $T$ be a rooted spanning tree of
$D$ with vertex $c$ as a leaf. If $x_1\rightarrow
x_2\rightarrow\ldots\rightarrow x_{k}\rightarrow c$ is the unique
directed path from $x_1$ (the root of $T$) to $c$, let $\sigma_T$
denote the simplex
$\{\overrightarrow{x_1x_2},\overrightarrow{x_2x_3},\ldots,
\overrightarrow{x_{k}c},\overrightarrow{cx_{1}}\}$. It is obvious
that $\sigma_T\notin \Delta(D)$. Also note that
$\partial\sigma_T\subset \Delta(D)$.

\noindent Let
$A=\{y_1,y_2,\ldots,y_{r}\}=V(D)\setminus\{x_1,x_2,\ldots,x_k,c\}$,
i.e., $A$ contains $r=n-k-1$ vertices that do not belong to the
unique directed path from $x_1$ to $c$ in $T$. For any $y_i\in A$
there exists the unique vertex $z_i$ such that
$\overrightarrow{z_iy_i}\in E(T)$. Now, we define
\begin{equation}\label {E:SferaS_T}
S_T=\partial\sigma_T*\{\overrightarrow{z_1y_1},
\overrightarrow{cy_1}\}
*\{\overrightarrow{z_2y_2},\overrightarrow{cy_2}\}*\cdots*
\{\overrightarrow{z_{r}y_{r}},\overrightarrow{cy_{r}}\}.
\end{equation}
 It is
not complicated to prove that $S_T\subset \Delta(D)$. The sphere
$S_T$ is $(n-k-1)$-folded bipyramid over the boundary of
$k$-simplex $\sigma_T$.

\begin{prop}
If a directed graph $D$ has two complete sources, then $\Delta(D)$
is the union of the spheres defined in (\ref{E:SferaS_T}).
\end{prop}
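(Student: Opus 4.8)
The plan is to show two inclusions. Since each $S_T$ is a subcomplex of $\Delta(D)$ (as noted just before the statement), the union $\bigcup_T S_T$ over all generating trees $T$ is contained in $\Delta(D)$. The content is the reverse inclusion: every facet (and hence every face) of $\Delta(D)$ lies in some $S_T$. So first I would fix a facet $F$ of $\Delta(D)$, i.e. a rooted spanning tree of $D$, and produce a generating tree $T$ with $c$ as a leaf such that $F \in S_T$. Here I would crucially use the hypothesis that $D$ has \emph{two} complete sources, say $c$ and $c'$: this guarantees that $\overrightarrow{c'x}\in E(D)$ for every $x$, so in any spanning tree $F$ we can ``repair'' the situation at $c$.

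Next I would analyze the structure of $S_T$ to get a workable membership criterion. Unwinding the join in (\ref{E:SferaS_T}): a facet of $S_T$ consists of one facet of $\partial\sigma_T$ — that is, the cycle $\{\overrightarrow{x_1x_2},\ldots,\overrightarrow{x_kc},\overrightarrow{cx_1}\}$ with exactly one edge deleted — together with, for each $y_i\in A$, a choice of either $\overrightarrow{z_iy_i}$ or $\overrightarrow{cy_i}$. So a spanning tree $F$ belongs to $S_T$ precisely when: (a) the restriction of $F$ to the path vertices $\{x_1,\ldots,x_k,c\}$ together with the edge $\overrightarrow{cx_1}$ misses exactly one edge of $\sigma_T$, i.e. $F$ contains all but one of the edges of that cycle; and (b) for each vertex $y_i$ off the path, $F$'s incoming edge at $y_i$ is either $\overrightarrow{z_iy_i}$ or $\overrightarrow{cy_i}$. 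I would record this as a lemma.

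Then, given an arbitrary spanning tree $F$, I would construct $T$. Let $c$ have a unique parent $p$ in $F$ (if $c$ is already a leaf of $F$, a minor case). Form $T_0$ from $F$ by: redirecting every edge $\overrightarrow{cx}$ of $F$ with $x\neq c$ to come instead from $c'$ — since $c'$ is a complete source this is legal — and, if this disconnects things or creates the wrong root, adjusting via the edge $\overrightarrow{c'(\cdot)}$ as in the argument right after Remark~\ref{shellofcomplete}. The point is to obtain a spanning tree $T$ in which $c$ is a leaf (its only incident edge being its parent edge) while keeping, as much as possible, the parent structure of $F$ at the other vertices. Set $x_1\to\cdots\to x_k\to c$ to be the root-to-$c$ path in $T$. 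One then checks: for $y_i\notin\{x_1,\ldots,x_k,c\}$, the parent of $y_i$ in $F$ is either the same as in $T$ (namely $z_i$) or was $c$ (in which case $F$ uses $\overrightarrow{cy_i}$), giving condition (b); and along the path, $F$ and $T$ can differ only in a controlled way near $c$, so that $F$ contains all but exactly one edge of the cycle $\sigma_T$, giving condition (a). Hence $F\in S_T$.

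The main obstacle I expect is the path/cycle bookkeeping in condition (a): one must verify that after re-rooting and moving the edges at $c$ over to $c'$, the original tree $F$ really does sit on the boundary $\partial\sigma_T$ in the correct way — that is, that $F$ agrees with the cycle $\{\overrightarrow{x_1x_2},\ldots,\overrightarrow{x_kc},\overrightarrow{cx_1}\}$ except in a single edge, rather than differing in two or more. This forces a careful choice of which edge at $c$ to reassign and whether to invoke the ``root'' case $\overrightarrow{c'r}$; handling the degenerate cases ($c$ already a leaf in $F$, or $F$ already rooted so that the path has length one) needs to be done explicitly. Conditions on the off-path vertices (b) are comparatively routine, since every such vertex keeps its $F$-parent unless that parent was $c$, and the construction never creates a new edge out of $c$. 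Once both inclusions are in hand, since every face of $\Delta(D)$ is contained in some facet and every facet lies in some $S_T$, we conclude $\Delta(D)=\bigcup_T S_T$.
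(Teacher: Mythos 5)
Your plan matches the paper's proof. The paper constructs, from a facet $T$ in which $c$ is not a leaf, the tree $T'$ obtained by redirecting the children of $c$ to come from $c'$ — adding $\overrightarrow{c'x_i}$ for each child $x_i$ of $c$ when $c'$ is not below $c$, and replacing the one edge $\overrightarrow{c'x_{i}}$ that would create a cycle by $\overrightarrow{c'r}$ ($r$ the root of $T$) when $c'$ is below $c$, which is exactly the repair you point to from the discussion after Remark~\ref{shellofcomplete} — and then asserts $T\subset S_{T'}$; the cycle/path bookkeeping that you flag as the main remaining obstacle is also left implicit in the paper.
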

\begin{proof}
Let us denote two complete sources in $D$ by $c$ and $c'$. If $c$
is a leaf in $T$, then we have $T\in S_T$. If $c$ is not a leaf in
a tree $T$, then let $\{x_1,x_2,\ldots,x_k\}$ be the set of all
vertices of $D$ such that $\overrightarrow{cx}_i\in E(T)$ for all
$i=1,2,\ldots,k$.

If the vertex $c'$ is not below $c$ in $T$, we define
$$T'=T\setminus
\{\overrightarrow{cx}_1,\overrightarrow{cx}_2,\ldots,
\overrightarrow{cx}_k\}\cup
\{\overrightarrow{c'x}_1,\overrightarrow{c'x}_2,\ldots,
\overrightarrow{c'x}_k\}.$$
In the case when $c'$ is below $c$ (then we have that $c'=x_i$ or
$c'$ is below $x_i$) and the root of $T$ is $r$ we define
$$T'=T\setminus
\{\overrightarrow{cx}_1,\overrightarrow{cx}_2,\ldots,
\overrightarrow{cx}_k\} \cup
\{\overrightarrow{c'x}_1,\ldots,\overrightarrow{c'x}_{i-1},
\overrightarrow{c'r} ,\overrightarrow{c'x}_{i+1},
\ldots,\overrightarrow{c'x}_k\}.$$ In both cases the directed tree
$T'$ is a generating facet of $\Delta(D)$. Obviously, the facet
$T$ is contained in the sphere $S_{T'}$.

\end{proof}
We conclude now that $\Delta(G_n)$ is the union of the
$(n-k-1)$-folded bipyramids over the boundary of $k$-simplex. A
simple calculation and the well-known formulae for the number of
forests with $n-1$ vertices and $k$ trees such that $k$ specified
nodes belong to distinct trees (Theorem 3.3 in \cite{moon}) give
us the number of spheres in $\Delta(G_n)$ of the same
combinatorial type.
\begin{cor}For any $n\geqslant 1$ the complex $\Delta(G_n)$
is a union of $(n-1)^{n-1}$ spheres of dimension $n-2$. For
$0<k<n$ there are exactly
 $$\frac{(n-1)!}{(n-k-1)!}k (n-1)^{n-k-2}$$
 of these spheres that are
 $(n-k-1)$-folded bipyramid over the boundary of $(k-1)$-simplex.

 \end{cor}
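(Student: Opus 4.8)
The plan is to reduce the statement to two classical enumeration facts about labelled trees and forests, using the preceding Proposition and Kozlov's description of generating facets. For $n\geqslant 2$ every vertex of $G_n$ is a complete source, so $G_n$ has two complete sources, and the Proposition gives $\Delta(G_n)=\bigcup_T S_T$, the union ranging over the generating facets $T$ of $\Delta(G_n)$, with $S_T$ as in (\ref{E:SferaS_T}). By Theorem 3.1 in \cite{Kozlov-Trees}, once one complete source $c$ is fixed, the generating facets of $\Delta(G_n)$ are exactly the rooted spanning trees $T$ of $G_n$ in which $c$ is a leaf, i.e.\ $d_T(c)=0$ and $c$ is not the root. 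The case $n=1$ is degenerate and is checked directly: $\Delta(G_1)=\{\emptyset\}$ is the $(-1)$-sphere, matching $(n-1)^{n-1}=0^0=1$.

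First I would count the generating facets. Specifying such a tree is the same as specifying a labelled tree on $[n]$ in which $c$ has degree $1$, together with a choice of root among the $n-1$ vertices different from $c$ (the root cannot be $c$, for then the unique edge at $c$ would be outgoing and $c$ would not be a leaf). By the Pr\"ufer correspondence a labelled tree on $[n]$ has a prescribed vertex of degree $1$ precisely when that vertex is absent from its Pr\"ufer code, so there are $(n-1)^{n-2}$ such trees; hence the number of generating facets, and therefore of spheres $S_T$, is $(n-1)\cdot(n-1)^{n-2}=(n-1)^{n-1}$. This settles the first assertion.

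For the second assertion I would read the combinatorial type of $S_T$ off the tree $T$. If the unique directed path in $T$ from its root to $c$ is $x_1\to x_2\to\cdots\to x_k\to c$, then $\sigma_T$ is a directed cycle on the $k+1$ vertices $x_1,\dots,x_k,c$, hence a $k$-simplex, while the off-path set $A$ has $n-k-1$ vertices; by (\ref{E:SferaS_T}), the fact that the boundary of a $k$-simplex is $S^{k-1}$, and $S^a*S^b\cong S^{a+b+1}$, the complex $S_T$ is the $(n-k-1)$-folded bipyramid over $\partial\sigma_T$, a sphere of dimension $n-2$, whose type depends only on the length $k$ of the path, with $1\leqslant k\leqslant n-1$. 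It then remains to count the generating facets whose root-to-$c$ path has a fixed length $k$: there are $\frac{(n-1)!}{(n-k-1)!}$ ways to pick the ordered path $x_1,\dots,x_k$ among the $n-1$ vertices different from $c$, and, given the path, completing it to such a tree means attaching the $n-k-1$ vertices of $A$ with nothing placed below $c$. Deleting $c$ and the edges $\overrightarrow{x_1x_2},\dots,\overrightarrow{x_{k-1}x_k}$ sets up a bijection between these completions and the forests on the $n-1$ vertices of $[n]\setminus\{c\}$ having $k$ trees such that $x_1,\dots,x_k$ lie in distinct trees; by Theorem 3.3 in \cite{moon} there are $k\,(n-1)^{n-k-2}$ of the latter. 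Multiplying gives $\frac{(n-1)!}{(n-k-1)!}\,k\,(n-1)^{n-k-2}$, as claimed; summing over $k$ recovers $(n-1)^{n-1}$, a convenient consistency check.

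I expect the main obstacle to be verifying the bijection in the last step: one must check that removing the $k-1$ path edges from $T\setminus\{c\}$ yields a forest in which $x_1,\dots,x_k$ lie in \emph{distinct} components (so that exactly $k$ components arise, each containing one $x_i$), and, conversely, that re-inserting those edges together with the leaf $c$ always produces a genuine rooted spanning tree of $G_n$ with $c$ a leaf and root-to-$c$ path of length exactly $k$, so that Moon's formula applies verbatim. A secondary, purely bookkeeping matter is to keep the path length, the dimension of $\sigma_T$, and the number of bipyramid foldings all attached to the same index throughout, so that the enumeration and the geometric description of $S_T$ are stated consistently (with the convention used above, $\sigma_T$ is a $k$-simplex).
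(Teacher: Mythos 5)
Your proposal is correct and follows essentially the same route as the paper, which simply invokes the preceding proposition (that $\Delta(G_n)$ is the union of the spheres $S_T$ over generating facets $T$) and Moon's Theorem 3.3 to count the trees with root-to-$c$ path of length $k$; you have merely filled in the details the paper leaves implicit. Your handling of the indexing is also the right reading: as literally stated the corollary's ``boundary of $(k-1)$-simplex'' is inconsistent with the dimension $n-2$ and with the construction of $S_T$, and your convention (with $\sigma_T$ a $k$-simplex) is the one that makes the formula and the folding count match.
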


\section{Shellability of skeleton of $\Delta(D)$}
 The subcomplex of a complex of directed trees
generated by its maximal facets was studied in \cite{Aya} and
\cite{Ch-Jos}.

Here we ask about the minimal dimension of the facets of
$\Delta(D)$, i.e., we want to determine the maximal $k$ such that
the $k$-skeleton of $\Delta(D)$ is pure. Note that for any directed
graph $D$ we have that the $k$-skeleton of $\Delta(D)$ is
$$\Delta^{(k)}(D)=\{F: F \textrm{ is a rooted forest in } D
\textrm{ with at least } |V(D)|-k-1\textrm{ trees }\}.$$ For a
simple graph $G$ let $\overrightarrow{G}$ denote the directed
graph obtained by replacing every edge $xy$ of $G$ with two
directed edges $\overrightarrow{xy}$ and $\overrightarrow{yx}$.

\noindent The greatest distance between two vertices of a graph
$G$ is the \textit{diameter} of $G$, denoted by $diam(G)$. A
subset of the vertex set of a graph is \textit{independent} if no
two of its elements are adjacent. The set of neighbors of a vertex
$v$ in a graph $G$ is denoted by $N(v)$.

For a graph $G$ we say that $A\subseteq V(G)$ is a
\textit{strongly independent} set if $A$ is independent and
$N(u)\cap N(v)=\emptyset$ for all $u,v \in A, u\neq v$. Let $r(G)$
denote the maximal cardinality of a strongly independent subset
of $V(G)$.

\begin{prop}\label{P:ostrogonezavisnim}
The $k$-skeleton of $\Delta(\overrightarrow{G})$ is pure if and
only if $k\leqslant |V(G)|-1-r(G)$.
\end{prop}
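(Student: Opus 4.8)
The plan is to reduce purity of the skeleta of $\Delta(\overrightarrow G)$ to a statement about the smallest dimension of a facet, and then to compute that number. Write $n=|V(G)|$. A face $\sigma$ of $\Delta(\overrightarrow G)$ with $\dim\sigma<k$ is a maximal face of the $k$-skeleton $\Delta^{(k)}(\overrightarrow G)$ exactly when it is a facet of $\Delta(\overrightarrow G)$ (if $\sigma$ is not already maximal in $\Delta(\overrightarrow G)$, it sits inside some face of dimension $\dim\sigma+1\leqslant k$); hence $\Delta^{(k)}(\overrightarrow G)$ is pure precisely when no facet of $\Delta(\overrightarrow G)$ has dimension less than $k$, i.e. precisely when $k$ does not exceed the minimum dimension of a facet. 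A facet of $\Delta(\overrightarrow G)$ is an inclusion-maximal directed forest $F\subseteq\overrightarrow G$; writing $t$ for the number of its trees (one-vertex trees included, which by maximality are exactly the isolated vertices of $G$), $F$ has $n-t$ edges and so $\dim F=n-t-1$. Thus it suffices to prove that the largest possible value of $t$ over all inclusion-maximal directed forests of $\overrightarrow G$ equals $r(G)$, for then the minimum facet dimension is $n-1-r(G)$, as required.

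For the inequality $t\leqslant r(G)$, let $F$ be an inclusion-maximal directed forest with trees $T_1,\dots,T_t$ and respective roots $\rho_1,\dots,\rho_t$. The crucial point is that $N(\rho_i)\subseteq V(T_i)$ for each $i$: if a neighbour $x$ of $\rho_i$ lay in a different tree, then, since $\rho_i$ has indegree $0$ in $F$ and $x$ lies in another component, the edge $\overrightarrow{x\rho_i}$ of $\overrightarrow G$ could be adjoined to $F$ without creating a vertex of indegree $2$ or a directed cycle, contradicting maximality. Since $V(T_1),\dots,V(T_t)$ are pairwise disjoint, this immediately gives that $\{\rho_1,\dots,\rho_t\}$ is independent and has pairwise disjoint neighbourhoods, i.e. is strongly independent; hence $t\leqslant r(G)$.

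For the reverse inequality I would take a strongly independent set $A=\{a_1,\dots,a_r\}$ with $r=r(G)$ and construct an inclusion-maximal directed forest with exactly $r$ trees, rooted at the $a_i$. Observe first that every connected component of $G$ contains some $a_i$, since otherwise a vertex of such a component could be added to $A$. I then build a partition $V(G)=V_1\sqcup\dots\sqcup V_r$ with $a_i\in V_i$, $N(a_i)\subseteq V_i$ and $G[V_i]$ connected: start from the sets $\{a_i\}\cup N(a_i)$, which are pairwise disjoint (this is exactly strong independence) and each induce a connected subgraph of $G$, and then, inside each component of $G$, repeatedly move an as-yet-unassigned vertex into a class containing a neighbour of it, until every vertex is assigned. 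Choosing a spanning tree of $G[V_i]$ and orienting it away from $a_i$ for each $i$, the union $F$ is a directed forest with $t=r$ trees, and it is inclusion-maximal: an edge $\overrightarrow{xy}$ that could be added would require $y$ to have indegree $0$, so $y=a_i$ for some $i$, whence $x\in N(a_i)\subseteq V_i$ and $x$ already lies in the tree of $y$. Therefore $F$ is a facet of dimension $n-1-r(G)$, and the two inequalities together finish the proof.

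The main obstacle is this last construction: one must keep every class of the partition connected while preserving the inclusions $N(a_i)\subseteq V_i$, and one must ensure that no component of $G$ is left without a root — this is precisely where the maximality of $A$ among strongly independent sets is genuinely used. The remaining steps are routine once the right notion, the roots of a maximal directed forest, is brought to the fore.
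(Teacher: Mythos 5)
Your argument is correct and follows essentially the same route as the paper: the key observation in both is that the roots $\rho_1,\dots,\rho_t$ of an inclusion-maximal directed forest satisfy $N(\rho_i)\subseteq V(T_i)$ (hence form a strongly independent set), so minimal facets of $\Delta(\overrightarrow G)$ correspond to maximum strongly independent sets. Your write-up is more complete than the paper's, which leaves the converse direction (building a maximal forest with $r(G)$ trees from a maximum strongly independent set, and in particular the fact that every component of $G$ must contain a root) as an unargued assertion.
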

\begin{proof}
Let $F$ be a directed forest of $\overrightarrow{G}$ with roots
$x_1,x_2,\ldots,x_t$. If the forest $F$ is a facet of
$\Delta(\overrightarrow{G})$, then $\{x_1,x_2,\ldots,x_t\}$ is an
independent set in $G$. Further, if $T_i$ denotes the tree of $F$
that contain $x_i$, then $N(x_i)\subseteq V(T_i)$. Therefore we
obtain that $\{x_1,x_2,\ldots,x_t\}$ is a strongly independent
set.

So, minimal facets of $\Delta(\overrightarrow{G})$ correspond with
maximal strongly independent sets of $G$.

\end{proof}
\begin{cor}
For a connected graph $G$, the complex $\Delta(\overrightarrow{G})$
is pure if and only if $diam(G)\leqslant 2$.
\end{cor}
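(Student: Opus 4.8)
The plan is to deduce this corollary directly from Proposition~\ref{P:ostrogonezavisnim}, so the work reduces to a purely graph-theoretic observation: for a connected graph $G$ on $n=|V(G)|$ vertices, the complex $\Delta(\overrightarrow{G})$ is pure (as a whole, i.e. all its facets have the same dimension) precisely when its top-dimensional skeleton is already pure, which happens when $k=n-1-r(G)$ coincides with the dimension bound forcing $r(G)=1$. So first I would record that $\Delta(\overrightarrow{G})$ is pure if and only if every facet of $\Delta(\overrightarrow{G})$ is a spanning tree, equivalently the minimal facets (which by the Proposition correspond to maximal strongly independent sets of $G$) are spanning trees, equivalently $r(G)=1$.

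Next I would translate $r(G)=1$ into a diameter statement. The condition $r(G)\ge 2$ means there exist two distinct vertices $u,v$ that are non-adjacent and with $N(u)\cap N(v)=\emptyset$; in a connected graph this says exactly that there is a pair of vertices at distance at least $3$ (no common neighbour and no edge means no path of length $\le 2$ between them), i.e. $diam(G)\ge 3$. Conversely, if $diam(G)\ge 3$, pick $u,v$ realizing a distance-$3$ (or larger) shortest path; then $u\not\sim v$ and they have no common neighbour, so $\{u,v\}$ is strongly independent and $r(G)\ge 2$. Hence $r(G)=1 \iff diam(G)\le 2$, and combining with the first paragraph gives the claim. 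A small degenerate case to mention: if $G$ has a single vertex then $diam(G)=0\le 2$ and $\Delta(\overrightarrow{G})$ is trivially pure.

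I would then write the argument compactly: by Proposition~\ref{P:ostrogonezavisnim} the $(n-2)$-skeleton $\Delta^{(n-2)}(\overrightarrow{G})=\Delta(\overrightarrow{G})$ is pure iff $n-2\le n-1-r(G)$, i.e. $r(G)\le 1$; since $G$ is connected we always have $r(G)\ge 1$ (any single vertex is strongly independent), so the condition is $r(G)=1$; and the distance argument above shows $r(G)=1$ is equivalent to $diam(G)\le 2$. The main (though still elementary) obstacle is getting the equivalence $r(G)\ge 2 \iff diam(G)\ge 3$ exactly right, in particular handling the connectivity hypothesis so that "no edge and no common neighbour" really does force a shortest path of length $\ge 3$ rather than vertices in different components — but connectedness of $G$ is precisely what rules that out, so it is clean.

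\begin{proof}
Write $n=|V(G)|$. Since $G$ is connected, any single vertex of $G$ is a strongly independent set, so $r(G)\ge 1$. The complex $\Delta(\overrightarrow{G})$ has dimension $n-2$, so it is pure if and only if its $(n-2)$-skeleton is pure, which by Proposition~\ref{P:ostrogonezavisnim} holds if and only if $n-2\le n-1-r(G)$, i.e. $r(G)\le 1$. Combining with $r(G)\ge 1$, purity of $\Delta(\overrightarrow{G})$ is equivalent to $r(G)=1$.

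It remains to check that, for a connected graph $G$, one has $r(G)=1$ if and only if $diam(G)\le 2$. Suppose $diam(G)\ge 3$ and choose vertices $u,v$ with a shortest $u$--$v$ path of length at least $3$. Then $u$ and $v$ are non-adjacent and have no common neighbour (a common neighbour would give a path of length $2$), so $\{u,v\}$ is a strongly independent set and $r(G)\ge 2$. Conversely, if $r(G)\ge 2$, pick distinct $u,v$ in a strongly independent set; then $u\not\sim v$ and $N(u)\cap N(v)=\emptyset$, so there is no $u$--$v$ path of length $\le 2$, and since $G$ is connected the distance between $u$ and $v$ is at least $3$, giving $diam(G)\ge 3$. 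Hence $r(G)=1 \iff diam(G)\le 2$, and the corollary follows.
\end{proof}
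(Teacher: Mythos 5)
Your proposal is correct and follows exactly the route the paper intends: the corollary is a direct specialization of Proposition~\ref{P:ostrogonezavisnim} to the top skeleton, combined with the elementary observation that a strongly independent pair is the same thing as a pair of vertices at distance at least $3$ in a connected graph. The handling of connectedness (to rule out vertices in different components) and of the one-vertex case is careful and adds nothing beyond what the paper implicitly assumes.
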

\noindent For a graph $G$ let $\textrm{m}_G$ denote the maximal
dimension of skeleton of $\Delta(\overrightarrow{G})$ that is
pure. From Proposition \ref{P:ostrogonezavisnim} we know that
$\textrm{m}_G=|V(G)|-r(G)-1$. Now we examine shellability of
$\Delta^{(\textrm{m}_G)}(\overrightarrow{G})$.

We say that a maximal strongly independent set
$A=\{x_1,x_2,\ldots,x_r\}$ of a graph $G$ is a \textit{complete
$r$-source} if $V(G)=A\cup N(x_1)\cup N(x_2)\cdots \cup N(x_r)$.

\begin{thm}\label{shelofskel}
If a graph $G$ has a complete $r$-source, then
$\Delta^{(\textrm{m}_G)}(\overrightarrow{G})$ is shellable.
\end{thm}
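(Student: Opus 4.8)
The idea is to mimic the complete-source shelling from Section 2, but now working with a complete $r$-source $A=\{x_1,\dots,x_r\}$ in place of a single complete source. Recall from Proposition~\ref{P:ostrogonezavisnim} that the facets of $\Delta^{(\textrm{m}_G)}(\overrightarrow{G})$ are exactly the directed forests $F$ in $\overrightarrow{G}$ whose set of roots is a maximal strongly independent set of $G$; since $A$ is a complete $r$-source, one distinguished family of facets consists of the forests whose roots are precisely the vertices of $A$ and in which every vertex of $N(x_i)$ is an out-neighbour of $x_i$ (i.e. a leaf attached directly to $x_i$). The first step is to pick such a "star forest" $F_0$ — the disjoint union of the $r$ stars centered at the $x_i$ — as the initial facet of the shelling.

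Next I would set up the partition. For a facet $F$ let $d_F(A)=\sum_{i=1}^r d_F(x_i)$ be the total out-degree of the sources, or more precisely record the tuple $(d_F(x_1),\dots,d_F(x_r))$; put $F$ into class $\mathcal F_j$ where $j = (\sum_i |N(x_i)|) - d_F(A)$ counts how many edges of $F$ emanate from vertices \emph{other than} the $x_i$ (equivalently, how far $F$ is from being a union of full stars). Then $\mathcal F_0=\{F_0\}$. The core of the proof is to verify the condition~(\ref{shellpro}) of Remark~\ref{shellofcomplete}: given facets $F\in\mathcal F_i$, $F'\in\mathcal F_j$ with $i\le j$ and $F\neq F'$, I need an edge $\overrightarrow{xy}\in F'\setminus F$ with $x$ not a source, together with a facet $F''$ in an earlier class realizing $F\cap F'\subseteq F''\cap F'=F'\setminus\{\overrightarrow{xy}\}$. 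The construction of $F''$ parallels Kozlov's: one removes $\overrightarrow{xy}$ from $F'$ and re-attaches $y$ to the source whose neighbourhood contains it — here is where the \emph{complete} $r$-source hypothesis is essential, since it guarantees that every vertex lies in some $N(x_i)$ and hence has such a re-attachment available. As in Section 2 one must split into the case where $x_i$ is not below $y$ in $F'$ (add $\overrightarrow{x_i y}$) and the case where $x_i$ is below $y$ (reconnect at the root of that tree instead). Because $A$ is strongly independent, the neighbourhoods $N(x_i)$ are pairwise disjoint, so the source to which $y$ is reassigned is uniquely determined and the resulting $F''$ is again a valid facet lying in a strictly earlier class $\mathcal F_k$ with $k<j$.

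Having checked~(\ref{shellpro}), Remark~\ref{shellofcomplete} immediately yields that any linear order refining $\mathcal F_0,\mathcal F_1,\dots$ is a shelling of $\Delta^{(\textrm{m}_G)}(\overrightarrow{G})$, and the theorem follows. The step I expect to be the main obstacle is the verification that the re-attached forest $F''$ is genuinely a facet of the \emph{skeleton} complex $\Delta^{(\textrm{m}_G)}(\overrightarrow{G})$ — not merely a forest in $\overrightarrow{G}$ — i.e. that its roots still form a \emph{maximal} strongly independent set and that $F''$ has the right number of trees; this is where the bookkeeping differs from the full-complex case of Section 2, and one has to use that moving an edge from a non-source vertex to a source changes the root set only in a controlled way (or not at all) while keeping the tree count fixed at $r$. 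A secondary point to be careful about is that the edge $\overrightarrow{xy}\in F'\setminus F$ can indeed be chosen with $x\notin A$: if every edge of $F'\setminus F$ started at a source then, by disjointness of the $N(x_i)$ and the facet structure, $F'$ and $F$ would have to coincide, contradicting $F\neq F'$.
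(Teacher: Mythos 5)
There is a genuine gap, and it sits exactly at the point you flag as a ``secondary point''. Your partition classifies a facet $F$ only by the total number $j$ of edges of $F$ that emanate from vertices outside $A$, and your argument needs the claim that two distinct facets cannot have their entire difference $F'\setminus F$ consisting of source edges. That claim is false. Take $G$ to be the path $x_1 - a - c - x_2 - d$, so that $A=\{x_1,x_2\}$ is a complete $2$-source with $N(x_1)=\{a\}$, $N(x_2)=\{c,d\}$, $r=2$ and $\textrm{m}_G=2$. The forests
$F=\{\overrightarrow{ca},\overrightarrow{ax_1},\overrightarrow{x_2c}\}$ and
$F'=\{\overrightarrow{ca},\overrightarrow{ax_1},\overrightarrow{x_2d}\}$
are both facets (each has two trees), they are distinct, each has exactly two non-source edges (so they lie in the same class $\mathcal F_2$ of your partition), and $F'\setminus F=\{\overrightarrow{x_2d}\}$ consists of a single edge emanating from the source $x_2$. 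Worse, every facet containing $F\cap F'=\{\overrightarrow{ca},\overrightarrow{ax_1}\}$ has at least two non-source edges, hence lies in class $\mathcal F_2$ or later; so no $F''$ from a strictly earlier class can realize $F''\cap F'=F'\setminus\{v\}$, and condition (\ref{shellpro}) of Remark \ref{shellofcomplete} genuinely fails for your partition. The phenomenon behind the counterexample is that a non-source edge of $F\cap F'$ may point \emph{into} a source (here $\overrightarrow{ax_1}$ captures $x_1$), which frees a source such as $x_2$ to choose among several out-neighbours without changing the count $j$.

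The paper avoids this by using a strictly finer order: facets are compared first by the full degree tuple $(d_F(x_1),\dots,d_F(x_r))$ in (reverse) lexicographic order, and, when these tuples agree, by a lexicographic comparison of the actual out-neighbour sets $F_i=\{v\in N(x_i):\overrightarrow{x_iv}\in E(F)\}$ with respect to a fixed linear order on $V(G)$. The resulting blocks consist of facets with \emph{identical} source stars, so two facets in the same block always differ in a non-source edge, and the case you dismiss (all of $F'\setminus F$ emanating from $A$) is treated separately in cases 2 and 3 of the paper's proof, where the refined order guarantees that the repaired forest $F''$ lands in a strictly earlier block. Your Case 1 construction (reattach the head of a non-source edge of $F'\setminus F$ to the appropriate $x_i$, or to a root if $x_i$ lies below it) matches the paper's Case 1 and is fine; what is missing is the finer ordering and the two additional cases. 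In the example above, with $c$ preceding $d$ in the vertex order, the paper's order places $F$ in an earlier block than $F'$, and its Case 3 produces $F''=F'\setminus\{\overrightarrow{x_2d}\}\cup\{\overrightarrow{x_2c}\}=F$, as required.
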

\begin{proof}
Let $A=\{x_1,x_2,\ldots,x_r\}$ be a complete $r$-source in $G$.
Assume that the vertex set $V(G)$ is linearly ordered. For a facet
$F$ of
 $\Delta^{(\textrm{m}_G)}(\overrightarrow{G})$
  (recall that $F$ is a directed forest with $r$ trees) we define
 $d_F=(d_F(x_1),d_F(x_2),\ldots,d_F(x_r))$ and $
 S_F=(F_1,F_2,\ldots,F_r)$
  where $F_i=\{v\in N(x_i):\overrightarrow{x_iv}\in E(F)\}$.

Let $<_L$ denote the lexicographical order on $\mathbb{N}^r$. We say
that $S_F \preceq S_{F'}$ if and only if $F_1=F'_1,\ldots,
  F_{i-1}=F'_{i-1}$ and $min (F_i\triangle F'_i )\in F_i$.
Now, we define a partial order on the facets of
$\Delta^{(\textrm{m}_G)}(\overrightarrow{G})$:
$$F <F' \Leftrightarrow \left\{%
\begin{array}{ll}
    d_F' <_L d_F, & \hbox{or ;} \\
   d_F' = d_F\textrm{ and }S_F \preceq S_{F'}. & \hbox{} \\
\end{array}%
\right.    $$ The above order induces a partition of the facets of
$\Delta^{(\textrm{m}_G)}(\overrightarrow{G})$. A block in this
partition contains all forests of
$\Delta^{(\textrm{m}_G)}(\overrightarrow{G})$ in which the sets of
outgoing edges having $x_i$ as the source are the same for all
$i=1,2,\ldots,r$. Note that the relation $<$ induces a linear
order on the blocks. The forest with edges
$\{\overrightarrow{x_iv}:x_i\in A, v\in N(x_i)\}$ is the only
facet contained in the first block.

 Now, we will prove that this partition of the facets of
$\Delta^{(\textrm{m}_G)}(\overrightarrow{G})$ satisfies conditions
described in Remark \ref{shellofcomplete}.
 Consider two different forests $F,F'\in \Delta^{(\textrm{m}_G)}
 (\overrightarrow{G})$ such that
 $F\in \mathcal{F}_i, F'\in \mathcal{F}_j$ and $i\leqslant j$.
Let $T_1,T_2,\ldots,T_s$ denote the trees of the forest $F\cap
F'$. For $i=1,2,\ldots,s$ let $r_i$ denote the root of $T_i$.
Note that $s>r$ and all edges from $E(F)\setminus E(F')$ have
the form $\overrightarrow{xr_i}$. We consider the following three
possibilities:
 \begin{itemize}
    \item[1.] There exists an edge
    $\overrightarrow{uv}\in E(F')\setminus E(F)$ such that
    $u\notin A$. As we have that $r<s$, we can conclude that there
    exists $j$ such that $r_j\in N(x_i)$ and $x_i$ is not below
    $r_j$ in $F'\setminus \{\overrightarrow{uv}\}$. Then we set $ F''=F'\setminus
\{\overrightarrow{uv}\}\cup \{\overrightarrow{x_ir_j}\}$.
 \end{itemize}

Now, we assume that $\overrightarrow{uv}\in E(F')\setminus
    E(F)$ implies $u\in A$. Further, let $i_0$ denotes the minimal
$i\in [r]$
    for which there exists an edge $\overrightarrow{x_{i_0}u}\in E(F')
    \setminus
    E(F)$.
     \begin{itemize}
    \item[2.]
    If $E(F')\setminus E(F)$ also contains an edge
    $\overrightarrow{x_j v}$ such that $i_0<j$, then from
    $d_F(x_{i_0})\geqslant d_{F'}(x_{i_0})$ we conclude that there
    exists $\overrightarrow{x_{i_0}z}\in E(F)\setminus
    E(F')$. The vertex $z$ is the root in $F'$ and $x_{i_0}$ is not below
    $z$ in $F'$.
    Otherwise we have an
    edge $\overrightarrow{xz}$ (or $\overrightarrow{zw}$) in $E(F')\setminus E(F)$,
    such that $x\notin A$ (or $z\notin A$) . In this case we set $F''=F'\setminus
\{\overrightarrow{x_jv}\}\cup \{\overrightarrow{x_{i_0}z}\}.$
    \item[3.] If $E(F')\setminus E(F) =
    \{\overrightarrow{x_{i_0}v_1},\overrightarrow{x_{i_0}v_2},
    \ldots, \overrightarrow{x_{i_0}v_m}\}$, we have that there
    exists the edge $\overrightarrow{x_{i_0}u}\in E(F)\setminus E(F')$
    such that $u$ is smaller than any of $v_i$ in the linear order
    defined on $V(G)$. Again $u$ is the root in $F'$, and we set
    $F''=F'\setminus
\{\overrightarrow{x_{i_0}v_1}\}\cup \{\overrightarrow{x_{i_0}u}\}.$
 \end{itemize}

 In any of the cases considered above, it is clear that the forests $F,F', F''$
 satisfy (\ref{shellpro}).

\end{proof}

Now, we investigate shellability of
$\Delta^{(\textrm{m}_{C_n})}(\overrightarrow{C}_n)$, where $C_n$
denotes a cycle with $n$ vertices.
\begin{thm}
A complex $\Delta^{(\textrm{m}_{C_n})}(\overrightarrow{C}_n)$ is
shellable if and only if  $n=3k$ or $n=3k+1$. 

\end{thm}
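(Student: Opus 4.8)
The plan is first to pin down the combinatorics of the complex. Because every vertex of $C_n$ has exactly two neighbours and any two vertices at distance $2$ share a neighbour, a subset of $V(C_n)$ is strongly independent if and only if its elements are pairwise at distance at least $3$; hence $r(C_n)=\lfloor n/3\rfloor$ and $\textrm{m}_{C_n}=n-\lfloor n/3\rfloor-1$. A directed forest in $\overrightarrow{C}_n$ is obtained by deleting some of the $n$ edges of $C_n$ and choosing a root for each resulting path, and a short check (a one‑arc forest is always a facet of $\Delta(\overrightarrow{C}_n)$, while a multi‑arc facet forces every arc to have at least three vertices with interior root, so every facet of $\Delta(\overrightarrow{C}_n)$ is spanning with at most $\lfloor n/3\rfloor$ trees) shows that the facets of $\Delta^{(\textrm{m}_{C_n})}(\overrightarrow{C}_n)$ are exactly the directed forests with precisely $\lfloor n/3\rfloor$ components; equivalently, a choice of $\lfloor n/3\rfloor$ deleted edges together with a rooting of each of the $\lfloor n/3\rfloor$ arcs. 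In particular the complex is pure of dimension $\textrm{m}_{C_n}$, as it must be.

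For the ``if'' direction I separate the two residues. If $n=3k$, then $A=\{1,4,7,\dots,3k-2\}$ is a maximal strongly independent set with $V(C_{3k})=A\cup N(x_1)\cup\cdots\cup N(x_k)$, i.e.\ a complete $k$‑source, so $\Delta^{(\textrm{m}_{C_n})}(\overrightarrow{C}_{3k})$ is shellable directly by Theorem \ref{shelofskel}. If $n=3k+1$, the same $A$ is still a maximal strongly independent set of the extremal size $k=r(C_{3k+1})$, but now exactly one vertex $w$ lies outside $A\cup\bigcup_i N(x_i)$. I would rerun the proof of Theorem \ref{shelofskel} essentially verbatim: order the facets by the sequence $(d_F(x_1),\dots,d_F(x_k))$ lexicographically, break ties by the order $\preceq$ on the tuples $S_F$, and verify the condition of Remark \ref{shellofcomplete} for the induced block partition. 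The complete‑source hypothesis is used in that proof only to supply, for an edge $\overrightarrow{uv}\in E(F')\setminus E(F)$ with $u\notin A$, a root of $F\cap F'$ lying in some $N(x_i)$; the extra work for $n=3k+1$ is one additional case treating the configuration in which that reroute target can only be $w$ (or the root of $F'$), where one reroutes through $w$, respectively along the path of $F'$ towards its root, and checks that the resulting $F''$ still satisfies (\ref{shellpro}).

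For the ``only if'' direction I must show that $\Delta^{(\textrm{m}_{C_n})}(\overrightarrow{C}_n)$ fails to be shellable when $n=3k+2$. Since $C_n$ then has no complete source of any size, Theorem \ref{shelofskel} gives nothing, and in fact I claim the complex is not even Cohen--Macaulay, which already rules out shellability. The plan is an induction on $k$. The base case is $n=5$, where $\textrm{m}_{C_5}=3$ is the top dimension, so $\Delta^{(3)}(\overrightarrow{C}_5)=\Delta(\overrightarrow{C}_5)$; here a direct computation shows $\tilde H_2(\Delta(\overrightarrow{C}_5))\neq 0$ (for instance by deleting the closed star of a well-chosen vertex and identifying the remainder, or by exhibiting a $2$‑cycle which is not a boundary), and since $2<\textrm{m}_{C_5}$ this contradicts the Cohen--Macaulay property. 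For the inductive step I would replace one edge of $C_{n-3}$ by a path through three new interior vertices and analyse how the faces of dimension at most $\textrm{m}_{C_n}$ change; the target is an isomorphism $\tilde H_{i}\big(\Delta^{(\textrm{m}_{C_n})}(\overrightarrow{C}_{n})\big)\cong \tilde H_{i-2}\big(\Delta^{(\textrm{m}_{C_{n-3}})}(\overrightarrow{C}_{n-3})\big)$ for $i<\textrm{m}_{C_n}$ (note $\textrm{m}_{C_n}=\textrm{m}_{C_{n-3}}+2$), so that a nonvanishing class in degree $2$ for $C_5$ propagates to a nonvanishing class below the top dimension for every $n\equiv 2\pmod 3$.

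The hard part is this reduction lemma in the ``only if'' direction: one needs a clean description of the effect of inserting a length‑$3$ arc, most naturally via a discrete Morse matching on the low‑dimensional faces collapsing everything down to (a double suspension of) the smaller complex, together with a careful treatment of how the new arc's edges interact with the roots of the surrounding forest. A secondary difficulty is the finite but genuine homology computation for $\Delta(\overrightarrow{C}_5)$ in the base case, and the extra case bookkeeping in the $n=3k+1$ part of the ``if'' direction; the $n=3k$ case is immediate from Theorem \ref{shelofskel}.
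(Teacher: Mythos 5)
Your setup (facets of the skeleton are the spanning forests with exactly $\lfloor n/3\rfloor$ components) and the $n=3k$ case are fine and agree with the paper. The other two parts, however, each contain a genuine gap. For $n=3k+1$ you propose to ``rerun the proof of Theorem \ref{shelofskel} essentially verbatim'' after noting that $A=\{1,4,\dots,3k-2\}$ misses exactly one vertex $w=3k$. This is precisely where the work lies: the proof of Theorem \ref{shelofskel} repeatedly uses that every root of $F\cap F'$ outside $A$ is a neighbour of some $x_i$, and once $w$ can be a root (or a source of edges of $F'\setminus F$) the rerouting moves $F''$ are no longer available as stated. You gesture at ``one additional case'' but do not exhibit the modified order or verify condition (\ref{shellpro}) for it, so this direction is not proved. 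The paper sidesteps the difficulty entirely by identifying $\Delta^{(\textrm{m}_{C_n})}(\overrightarrow{C}_n)$ with the skeleton $\mathcal{C}_{2n}^{(\textrm{m}_{C_n})}$ of the ``no two cyclically consecutive elements'' complex and writing down an explicit lexicographic shelling of $\mathcal{C}_{6k+2}^{(2k)}$ with a short case analysis; if you want to keep your approach you must actually carry out the extra cases, which is not obviously routine.

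For $n=3k+2$ your overall strategy (exhibit reduced homology strictly below the top dimension of the pure $(2k+1)$-dimensional skeleton) is the right one and is the same as the paper's, but your execution rests on an unproved ``reduction lemma'' $\tilde H_{i}(\Delta^{(\textrm{m}_{C_n})}(\overrightarrow{C}_{n}))\cong \tilde H_{i-2}(\Delta^{(\textrm{m}_{C_{n-3}})}(\overrightarrow{C}_{n-3}))$, which you yourself flag as the hard part; as written the only-if direction is therefore incomplete. The induction is also unnecessary. Since taking the $(2k+1)$-skeleton does not change $\tilde H_{2k}$, it suffices to know $\tilde H_{2k}(\Delta(\overrightarrow{C}_{3k+2}))\neq 0$, and this is already available: $\Delta(\overrightarrow{C}_{3k+2})$ differs from $\mathcal{C}_{6k+4}$ only in two top-dimensional facets, and $\mathcal{C}_{6k+4}\simeq \mathbb{S}^{2k}$ by Proposition 5.1 of \cite{Kozlov-Trees} (the sphere being the crosspolytope boundary $\{1,2\}*\{4,5\}*\cdots*\{6k+1,6k+2\}$, which lies inside the skeleton). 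Together with the $(2k+1)$-cycles coming from boundaries of $(2k+2)$-faces of $\mathcal{C}_{6k+4}$, this gives homology in two distinct dimensions and rules out shellability with no induction and no discrete Morse matching. I recommend replacing your inductive scheme by this direct citation, and either completing the $3k+1$ case analysis in detail or switching to the lexicographic shelling of $\mathcal{C}_{6k+2}^{(2k)}$.
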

\begin{proof} Note that $r(C_n)=\lfloor\frac{n}{3}\rfloor$
and therefore we have that
$$\textrm{m}_{{C_n}}=\left\{
               \begin{array}{ll}
                 2k-1, & \hbox{if $n=3k$;} \\
                 2k, & \hbox{if $n=3k+1$;} \\
                 2k+1, & \hbox{if $n=3k+2$.}
               \end{array}
             \right.
$$

Let $\mathcal{C}_n$ denote the
 simplicial complex with $n$ vertices indexed by $\mathbb{Z}_n$
 and $F\subseteq \mathbb{Z}_n$ is a face if and only if it does not
 contain $\{i,i+1\}$ for $i\in \mathbb{Z}_n$. It is obvious that
  $ \Delta(\overrightarrow{C}_n)=
 \mathcal{C}_{2n}\setminus\left\{[1,3,\ldots,2n-1],[2,4,\ldots,2n]\right\}$
 and therefore
 $ \Delta^{(\textrm{m}_{C_n})}(\overrightarrow{C}_n)=
 \mathcal{C}_{2n}^{(\textrm{m}_{C_n})}.$

If $n=3k$, then $\{1,4,7,\ldots,3k-2\}$ is a complete $r$-source
for $C_n$ and from Theorem \ref{shelofskel} we know that
$\Delta^{(2k-1)}(\overrightarrow{C}_{3k})$ is shellable.

 If $n=3k+1$, we will prove that the lexicographical order
of the facets of $\mathcal{C}_{6k+2}^{(2k)}$ defined  by $A<B$
if and only if $min(A\vartriangle B)\in A$ is a shelling order.\\
For
 $A=\{a_0,a_1,\ldots, a_{2k}\}<B=\{b_0,b_1,\ldots,b_{2k}\}$ let
 $a_i=min(A\vartriangle B)\in A $ and
 let $b_j=min\textrm{ }B\setminus A$.
\noindent  We consider $C=(B\setminus\{b_j\})\cup\{a_i\}$. Note
that $C$ is not contained in $\mathcal{C}_{6k+2}^{(2k)}$ if and
only if $a_0=1$, $b_0=2$, $b_{2k}=6k+2$. In that case, because
$1\in A$ we have that $6k+2\notin A$.

 If $b_{2k}=6k+2$ and $b_{2k-1}< 6k$,
 then we define $C=B\setminus\{6k+2\}\cup\{6k+1\}$.

\noindent If $b_0=2$, $b_{2k-1}=6k$, $b_{2k}=6k+2$,
 then there exists $s\in\{1,2,\ldots, 2k-1\}$ such that
 $b_s-b_{s-1}>3$.
  Then, we let $C=B\setminus\{6k+2\}\cup\{b_{s-1}+2\}$.
It is easy to check that the condition described in
(\ref{e:definshell}) is satified in any of the above cases. So, we
can conclude that $\Delta^{(2k)}(\overrightarrow{C}_{3k+1})$ is
shellable.

\noindent For $n=3k+2$ we consider complex
 $\Delta^{(2k+1)}(\overrightarrow{C}_{3k+2})=
 \mathcal{C}_{6k+4}^{(2k+1)}$.
 We know that $\mathcal{C}_{6k+4}$ is homotopy
equivalent with a $2k$-dimensional sphere (see Proposition 5.1 in
\cite{Kozlov-Trees}). From the proof of this proposition we can
identify this sphere with the boundary of $(2k+1)$-dimensional
crosspolytope $\{1,2\}*\{4,5\}*\cdots*\{6k+1,6k+2\}.$
\\
Obviously, this sphere is contained in
$\mathcal{C}_{6k+4}^{(2k+1)}$.

 However, $\mathcal{C}_{6k+4}^{(2k+1)}$ also
contains $(2k+1)$-dimensional spheres (boundaries of
$(2k+2)$-simplex $\{1,3,5,\ldots,4k+5\}$ in $\mathcal{C}_{6k+4}$).

  Therefore,
   we obtain that
  this complex is homotopy equivalent to a wedge of
  spheres of different dimensions.
 So, we conclude that $\Delta^{(2k+1)}(\overrightarrow{C}_{3k+2})$ is not
 shellable.
 \end{proof}

\section{Trees}

 For a simple graph $G=(V,E)$ the \textit{independency complex} $I(G)$ is
the simplicial complex with vertex set $V$ and with faces the
independent sets of $G$. The independence complex has been
previously studied in \cite{Eh_He},\cite{Meshulam}.

Shellability and vertex-decomposability of independency complexes
is disscussed in\cite{Doh-Eng} and \cite{Wood}. A complex $\Delta$
is \emph{vertex decomposable} if it is a simplex or (recursively)
$\Delta$ has a shedding vertex $v$ such that $\Delta \setminus
\{v\}$ and $link_{\Delta} v$ are vertex decomposable. It is
well-known that any vertex decomposable complex is shellable too.

A \emph{chordless} cycle of length $n$ in a graph $G$ is a cycle
$v_1, v_2, \ldots, v_n,v_1$ in $G$ with no chord, i.e. with no
edges except $\{v_1v_2,v_2v_3,\ldots,v_{n-1}v_n,v_nv_1\}$.

We will use the following Theorem.
\begin{thm}[\textrm{Theorem 1, }\cite{Wood}]\label{Russ}
 If $G$ is a graph with no chordless cycles of length other than 3
or 5, then $I(G)$ is vertex decomposable (hence shellable and
sequentially Cohen-Macaulay.)
\end{thm}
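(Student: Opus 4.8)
The statement is due to Woodroofe; the natural plan is an induction on $|V(G)|$ that, at each step, either recognizes $I(G)$ as a cone or strips off a well-chosen shedding vertex, arranged so that only the \emph{existence} of a shedding vertex needs work. If $G$ has an isolated vertex $u$ --- in particular if $G$ has no edges --- then every independent set extends by $u$, so $I(G)$ is the cone $\{u\}*I(G\setminus\{u\})$ and is vertex decomposable; this gives the base case and lets us assume $G$ has no isolated vertex. Moreover the property ``$G$ has no chordless cycle of length other than $3$ or $5$'' passes to every induced subgraph: a cycle of $G[W]$ uses only vertices of $W$, and any chord of it in $G$ joins two vertices of $W$ and so already lies in $G[W]$, so a forbidden chordless cycle in $G[W]$ would be one in $G$. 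In particular $G\setminus\{v\}$ and the induced subgraph $G\setminus N[v]$ on $V(G)\setminus(N(v)\cup\{v\})$ both inherit the hypothesis. Since $I(G)\setminus\{v\}=I(G\setminus\{v\})$ and $link_{I(G)}v=I(G\setminus N[v])$, once $v$ is a shedding vertex of $I(G)$ the inductive hypothesis makes both of these complexes vertex decomposable, hence so is $I(G)$.

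Next I would translate ``shedding vertex'' into graph language: $v$ is a shedding vertex of $I(G)$ precisely when no maximal independent set of $G\setminus N[v]$ is also maximal in $G\setminus\{v\}$, i.e.\ when for every maximal independent set $S$ of $G\setminus N[v]$ there is a neighbour $w$ of $v$ with $S\cup\{w\}$ independent. A convenient sufficient condition is a \emph{domination pair}: vertices $v\neq w$ with $N[w]\subseteq N[v]$, which forces $w\in N(v)$. Indeed, for any independent $S\subseteq V(G)\setminus N[v]$ one has $N(w)\cap S\subseteq N[v]\cap S=\emptyset$ and $w\notin S$, so $S\cup\{w\}$ is independent; thus $v$ is a shedding vertex. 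Everything therefore reduces to: a graph with an edge and no chordless cycle of forbidden length has a shedding vertex.

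If $G$ has a domination pair $(v,w)$, then $v$ is a shedding vertex and the induction runs; this already covers every chordal $G$ with no isolated vertex, because such a $G$ has a simplicial vertex $u$ with a neighbour $w$, and $N(u)$ being a clique through $w$ gives $N[u]\subseteq N[w]$, i.e.\ a domination pair $(w,u)$. The genuinely hard case --- the main obstacle --- is when $G$ has an edge but no domination pair; then $G$ is not chordal, so by hypothesis it contains an induced $5$-cycle $C$. Here I would seek a shedding vertex attached to $C$ and argue by contradiction: if a candidate $v$ fails to be a shedding vertex, choose a ``blocking'' maximal independent set $S$ of $G\setminus N[v]$, i.e.\ one dominating $N(v)$ (each vertex of $N(v)$ has a neighbour in $S$); one must then show that the edges from $S$ to $N(v)$, the edges inside $N(v)$, and the arcs of $C$ close up into a chordless cycle of length $4$ or of length at least $6$, which the hypothesis forbids. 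For example, two non-adjacent neighbours $w_1,w_2$ of $v$ dominated by a single $s\in S$ already give the chordless $4$-cycle on $v,w_1,s,w_2$, so blocking sets are very constrained; arranging the remaining cases so that the $5$-cycle $C$ lets one pin down a $v$ admitting \emph{no} blocking set is the technical heart of the argument. Once that is done, this case also delivers a shedding vertex, the induction closes, and $I(G)$ is vertex decomposable --- and therefore, as noted above, shellable and sequentially Cohen--Macaulay.
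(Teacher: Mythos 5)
First, a remark on context: the paper does not prove this statement at all --- it is Theorem 1 of Woodroofe \cite{Wood}, imported as a black box --- so there is no internal proof to compare yours against. Judged on its own terms, your framework is sound and is indeed the standard one: the hypothesis is hereditary under induced subgraphs; $I(G)\setminus\{v\}=I(G\setminus\{v\})$ and $link_{I(G)}v=I(G\setminus N[v])$; your translation of ``shedding vertex'' into the condition that no maximal independent set of $G\setminus N[v]$ dominates all of $N(v)$ is correct; and the domination-pair lemma ($N[w]\subseteq N[v]$ makes $v$ a shedding vertex) together with the existence of a non-isolated simplicial vertex correctly disposes of the chordal case.

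There is, however, a genuine gap at exactly the point you label ``the technical heart'': the case where $G$ has an edge, no domination pair, and hence a chordless $5$-cycle. You do not actually exhibit a shedding vertex there, and the one concrete step you supply does not close the argument. Since in this case no vertex is simplicial (a non-isolated simplicial vertex yields a domination pair), any candidate $v$ has two non-adjacent neighbours $w_1,w_2$, and a blocking set $S$ must dominate both; your chordless-$4$-cycle observation only excludes a \emph{common} dominator. If instead $w_1,w_2$ are dominated by distinct $s_1,s_2\in S$ with $s_1w_2,s_2w_1\notin E(G)$, then $s_1,w_1,v,w_2,s_2$ induce a path, not a cycle, so no forbidden chordless cycle appears and no contradiction is reached. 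Choosing the right vertex on (or adjacent to) the chordless $5$-cycle and using the cycle itself to force a forbidden chordless cycle out of any putative blocking set is precisely the content of Woodroofe's theorem beyond the classical chordal case; as written, your argument establishes vertex decomposability only for chordal graphs.
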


 We follow Kozlov \cite{Kozlov-Trees} and
say that a digraph $D$ is essentially a tree if it becomes an
undirected tree when one replaces all directed edges (or pairs of
directed edges going in opposite directions) by an edge.

\begin{thm}
Let $D=(V(D),E(D))$ be essentially a tree. Then $\Delta(D)$ is
vertex decomposable and hence shellable.
\end{thm}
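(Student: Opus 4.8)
The plan is to reduce the statement to Theorem \ref{Russ} by exhibiting a graph $G_D$ with $I(G_D)\cong\Delta(D)$ and no chordless cycle of length $\neq 3,5$. The vertices of $\Delta(D)$ are the directed edges of $D$, so let $G_D$ be the graph on vertex set $E(D)$ in which two directed edges are adjacent precisely when they cannot coexist in a directed forest, i.e. when together they create a directed cycle, a vertex of indegree $2$, or (in the case of an opposed pair $\overrightarrow{xy},\overrightarrow{yx}$) a $2$-cycle. A subset of $E(D)$ is a face of $\Delta(D)$ iff it is a directed forest iff it is pairwise ``compatible'' in this sense, so I first want to check that compatibility is genuinely a pairwise condition once $D$ is essentially a tree: because the underlying undirected graph of $D$ is a tree, any set of directed edges that is pairwise compatible cannot contain an undirected cycle, cannot have a vertex of indegree $\geq 2$, and is acyclic, hence is a directed forest. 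This is the one place where the hypothesis ``essentially a tree'' is used in an essential way, and I expect it to be the crux of the argument; the rest is bookkeeping. Granting it, $\Delta(D)=I(G_D)$.

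Next I would analyze the possible edges of $G_D$. Since the underlying graph is a tree, the only way two directed edges $e,e'$ can be incompatible is: (a) $e,e'$ are the two orientations $\overrightarrow{xy},\overrightarrow{yx}$ of the same tree edge; or (b) $e,e'$ share the same head, i.e. $e=\overrightarrow{xv}$, $e'=\overrightarrow{yv}$ with $x\neq y$ both neighbours of $v$. (No directed cycle can arise from a pair of edges whose underlying edges lie in a tree unless they are a $2$-cycle, which is case (a).) So $G_D$ decomposes edge-wise as follows: for each tree edge $\{x,y\}$ of the underlying tree carrying both orientations we get one ``type-(a)'' edge $\overrightarrow{xy}\,\overrightarrow{yx}$; and for each vertex $v$, the directed edges pointing into $v$ form a clique (a ``type-(b)'' clique $K_v$). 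These cliques and the type-(a) edges are essentially all of $G_D$.

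I would then argue that any chordless cycle in $G_D$ has length $3$. Take a chordless cycle $\gamma$ and consider the sequence of type-(a) edges and type-(b) cliques it traverses. Two consecutive vertices of $\gamma$ joined by a type-(b) edge lie in a common $K_v$; a chordless cycle can contain at most two vertices from any clique, so $\gamma$ visits each clique $K_v$ in at most one edge. If $\gamma$ ever used two type-(a) edges in a row, say $\overrightarrow{xy}\,\overrightarrow{yx}$ then $\overrightarrow{yx}\,\overrightarrow{xy}$, it would revisit a vertex; so type-(a) edges alternate with type-(b) edges along $\gamma$, forcing $\gamma$ to have even length and its vertices to be $\overrightarrow{x_0x_1},\overrightarrow{x_1x_0},\overrightarrow{x_2x_1},\overrightarrow{x_1x_2},\ldots$ Tracing which $K_v$ each type-(b) edge lies in, one sees the underlying undirected vertices $x_0,x_1,x_2,\ldots$ form a closed walk in the underlying tree, which must therefore backtrack — and backtracking produces a chord in $\gamma$ unless $\gamma$ is very short. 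A short case check (cycles of length $3$ and $4$ in $G_D$) shows that length-$3$ chordless cycles occur (three edges into a common vertex) but length-$4$ chordless cycles do not, since a putative $4$-cycle either repeats a clique (giving a chord) or corresponds to a nonbacktracking closed walk of length $\le 4$ in a tree (impossible). Hence $G_D$ has no chordless cycle of length other than $3$, so in particular none of length $4$ or $\ge 5$; Theorem \ref{Russ} applies and $I(G_D)=\Delta(D)$ is vertex decomposable, hence shellable. The main obstacle, as noted, is the clean verification that in the ``essentially a tree'' setting compatibility of directed edges is a pairwise property and that the resulting incompatibility graph has the described clique-and-matching structure; once that is in place the cycle analysis is routine.
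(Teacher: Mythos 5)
Your proposal is correct and follows essentially the same route as the paper: you build the same auxiliary graph on vertex set $E(D)$ (cliques on edges sharing a sink, plus a matching joining opposite pairs), identify $\Delta(D)$ with its independence complex, verify there are no chordless cycles of length other than $3$, and invoke Theorem \ref{Russ}. Your write-up merely spells out in more detail the two facts the paper asserts without proof, namely that pairwise compatibility suffices in the essentially-a-tree case and that the chordless-cycle condition holds.
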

\begin{proof}
For a given tree $D$ we define a simple graph $G$ in the following
way.
 For $v\in
V(D)$ let $d^-(v)=|\{x\in V(D):\overrightarrow{xv} \in E(D)\}|$
denote the in-degree of $v$ in $D$. We replace every $v\in V(D)$
with a complete graph $K_{d^-(v)}$ whose vertices correspond with
 directed edges
having $v$ as sink. Further, if both of directed edges
$\overrightarrow{uv}, \overrightarrow{vu}$ are
 contained in $E(D)$, then the
corresponding vertices of $K_{d^-(v)}$ and $ K_{d^-(u)}$ are
adjacent in $G$. Formally, we define $V(G)= E(D)$, and edges with
the same sink $\overrightarrow{ax},\overrightarrow{bx}$ are
adjacent in $G$. Also, if $\overrightarrow{ab},
\overrightarrow{ba} \in E(D)$ they are adjacent as vertices of
$G$.

Note that $A\subset V(G)$ is an independent set in $G$ if and only
if $A$ is the set of edges of a directed forest in $D$.
 Therefore we have that $\Delta(D)=I(G)$.
 Moreover, the construction
of $G$ and the assumption that $D$ is essentially a tree guaranteed
that $G$ does not contain a chordless cycle of length other than
$3$. Now, the statement of our theorem follows from Theorem
\ref{Russ}.

\end{proof}
We describe a way to find an explicit shelling of $\Delta(D)$. Let
$D$ be a directed graph and let $v\in V(D)$ be a leaf in $D$. In
other words there exists the unique vertex $x\in V(D)$ such that
$\overrightarrow{vx}$ or $\overrightarrow{xv}$ or both of them are
in $E(D)$ and there are no other edges where $v$ is a source or a
sink.

 Let $D'=D\setminus\{v\}$ and let
$\{y_1,y_2,\ldots,y_k\}= \{y\in V(D'):\overrightarrow{yx}\in
E(D')\}$. Furthermore, let
$D_0=D'\setminus\{\overrightarrow{y_1x},\overrightarrow{y_2x},
\ldots, \overrightarrow{y_kx}\}$ and assume that
$\overrightarrow{xy_i}\in E(D)$ for $i=1,2,\ldots,s$. Now, for
$p=1,2,\ldots,k$ we set
$D_p=D_0\setminus\{\overrightarrow{xy_i}\}$. Note that $D_p=D_0$
for $p>s$.

\noindent We know that the complexes $\Delta(D')$, $\Delta(D_0)$
and $\Delta(D_p)$ are shellable. \\Assume that: \begin{description}
    \item[(i)] $F_1,F_2,\ldots,F_t$ is a shelling of
    $\Delta(D')$;
    \item[(ii)] $H_1,H_2,\ldots,H_s$ is a shelling of
    $\Delta(D_0)$;
    \item[(iii)] $G^p_1,G^p_2,\ldots,G^p_{t_p}$ is a shelling of
    $\Delta(D_p)$ (for $p=1,2,\ldots,k$).
\end{description}
We use the above notation in the next proposition.
\begin{prop}\label{P:recursiveshelling}
We consider three possible cases.
\begin{itemize}
    \item[(a)] If $\overrightarrow{xv}\in E(D)$ and
    $\overrightarrow{vx}\notin E(D)$, then
    $F_1\cup\{\overrightarrow{xv}\},
    F_2\cup\{\overrightarrow{xv}\},
\ldots,F_t\cup\{\overrightarrow{xv}\}$ is a shelling of
$\Delta(D)$. Also, we have that
$h_{i,j}(\Delta(D))=h_{i-1,j}(\Delta(D'))$.
    \item[(b)] If $\overrightarrow{xv}\notin E(D)$ and
    $\overrightarrow{vx}\in E(D)$, then
    $$H_1\cup\{\overrightarrow{vx}\},
\ldots,H_s\cup\{\overrightarrow{vx}\},
G^1_1\cup\{\overrightarrow{y_1x}\},\ldots,
G^1_{t_1}\cup\{\overrightarrow{y_1x}\}, \ldots,G^k_{t_k}
 \cup\{\overrightarrow{y_kx}\}$$
is a shelling of $\Delta(D)$. Furthermore, we have that
$$h_{i,j}(\Delta(D))=h_{i-1,j}(\Delta(D_0))+
\sum_{p=1}^kh_{i-1,j-1}(\Delta(D_p)).$$
    \item[(c)] If $\overrightarrow{xv},
    \overrightarrow{vx}\in E(D)$, then
    $$F_1\cup\{\overrightarrow{xv}\},F_2
    \cup\{\overrightarrow{xv}\},
\ldots,F_t\cup\{\overrightarrow{xv}\},
H_1\cup\{\overrightarrow{vx}\},
H_2\cup\{\overrightarrow{vx}\},
\ldots,H_s\cup\{\overrightarrow{vx}\}$$ is a shelling of $D$.
In
that case we have that
$$h_{i,j}(\Delta(D))=h_{i-1,j}(\Delta(D'))+
h_{i-1,j-1}(\Delta(D_0)).$$
\end{itemize}
\end{prop}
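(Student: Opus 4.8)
The plan is to treat each of the three cases by directly verifying the shelling condition~(\ref{e:definshell}) for the proposed ordering, and then reading off the $h$-triangle from the combinatorial interpretation $h_{i,j}(\Delta)=|\{F:|F|=i,\,|\mathcal{R}(F)|=j\}|$ together with a bijection between facets and their restrictions. The key structural observation is that every facet of $\Delta(D)$, where $v$ is a leaf attached to $x$, is obtained from a facet of one of the subcomplexes $\Delta(D')$, $\Delta(D_0)$, $\Delta(D_p)$ by adjoining either $\overrightarrow{xv}$ or $\overrightarrow{vx}$; I would first make this facet correspondence precise in each case, paying attention to which directed forests of $D$ use the edge at $v$ and which of $x$'s other incoming/outgoing edges are forced to be dropped (this is exactly what the complexes $D_0$ and $D_p$ are engineered to record).

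For case (a), where only $\overrightarrow{xv}\in E(D)$: since $v$ has no other incident edge, every directed forest of $D$ either contains $\overrightarrow{xv}$ (and then restricting to $D'$ gives a forest of $D'$, with $v$ a singleton-free leaf) or does not, but a facet must place $v$ in a tree, so every facet of $\Delta(D)$ has the form $F\cup\{\overrightarrow{xv}\}$ with $F$ a facet of $\Delta(D')$; conversely each such union is a facet. The map $F\mapsto F\cup\{\overrightarrow{xv}\}$ is an order isomorphism on facets, intersections transform as $(F_a\cup\{\overrightarrow{xv}\})\cap(F_b\cup\{\overrightarrow{xv}\})=(F_a\cap F_b)\cup\{\overrightarrow{xv}\}$, and the shedding vertex witnessing~(\ref{e:definshell}) for $F_b$ in $\Delta(D')$ still works in $\Delta(D)$ because $\overrightarrow{xv}$ is never removed. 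Hence $\mathcal{R}(F\cup\{\overrightarrow{xv}\})=\mathcal{R}_{\Delta(D')}(F)$ and $|F\cup\{\overrightarrow{xv}\}|=|F|+1$, giving $h_{i,j}(\Delta(D))=h_{i-1,j}(\Delta(D'))$.

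For cases (b) and (c) I would do the same bookkeeping but now the facet set splits. In case (b), $v$ can only be a sink via $\overrightarrow{vx}$, so $v$ sits in the tree rooted further up through $x$; the facets fall into two families: those that also contain some $\overrightarrow{y_ix}$ (equivalently those whose restriction to $D'$ loses the ``$x$ as a non-root'' freedom — these come from $\Delta(D_0)$ after adjoining $\overrightarrow{vx}$) and those where instead one of the $\overrightarrow{xy_i}$ is present so that $x$ is internal — these come from the $\Delta(D_p)$ after adjoining $\overrightarrow{y_px}$. The proposed concatenation lists the $D_0$-block first, then the $D_p$-blocks, and one checks~(\ref{e:definshell}) by: (i) within each block using the given shelling of that subcomplex with the constant extra vertex; (ii) across blocks, for a facet $G^p_m\cup\{\overrightarrow{y_px}\}$ against any earlier facet, exhibiting the witness $F_l$ obtained by the standard move of rerouting $\overrightarrow{y_px}$ to $\overrightarrow{vx}$ (pushing it into the $D_0$-block) and naming the shedding vertex $\overrightarrow{y_px}$ itself. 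The restriction then picks up an extra element exactly for the $D_p$-facets, yielding the stated formula $h_{i,j}(\Delta(D))=h_{i-1,j}(\Delta(D_0))+\sum_{p=1}^k h_{i-1,j-1}(\Delta(D_p))$. Case (c) is the superposition of (a) and (b) in a simplified form: facets containing $\overrightarrow{xv}$ reduce to $\Delta(D')$ (the edge $\overrightarrow{vx}$ cannot also be used, and $\overrightarrow{xv}$ at the leaf does not constrain $x$'s other edges), facets containing $\overrightarrow{vx}$ reduce to $\Delta(D_0)$, and the first block is shelled as in (a) while the second is shelled as the $D_0$-block of (b); the cross-block condition is handled by rerouting $\overrightarrow{vx}$ to $\overrightarrow{xv}$, so $h_{i,j}(\Delta(D))=h_{i-1,j}(\Delta(D'))+h_{i-1,j-1}(\Delta(D_0))$.

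The main obstacle I anticipate is the cross-block verification of~(\ref{e:definshell}) in case (b): one must be sure that when a facet $F'=G^p_m\cup\{\overrightarrow{y_px}\}$ is reached, every earlier facet $F$ — whether from the $D_0$-block or from an earlier $D_q$-block — has $F\cap F'$ contained in $F'\setminus\{w\}$ for a single fixed shedding vertex $w$ of $F'$. The natural choice $w=\overrightarrow{y_px}$ requires checking that $F'\setminus\{\overrightarrow{y_px}\}\cup\{\overrightarrow{vx}\}$ is genuinely an earlier facet (it lies in the $D_0$-block) and absorbs all the intersections; this needs the root/below-ness case analysis familiar from Kozlov's argument and reproduced in the proof of Theorem~\ref{shelofskel}, namely distinguishing whether $v$ (or the relevant vertex) lies below $x$. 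I would also double-check the boundary behavior when $s=0$ or some $D_p$ equals $D_0$ (the note ``$D_p=D_0$ for $p>s$''), ensuring the $h$-triangle sums are not double-counting, since that is the easiest place for an off-by-one error in the final formulas.
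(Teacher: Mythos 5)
Your proposal follows essentially the same route as the paper: split the facets according to which edge at $v$ (or into $x$) they contain, inherit the within-block shellings with the constant extra vertex, and for the cross-block condition take as witness an earlier facet of the $D_0$-block (resp.\ $D'$-block) containing the stripped facet, with the distinguished edge as shedding vertex; the restriction bookkeeping then yields the stated $h$-triangle identities. Two small things to fix when writing it up: in case (b) the facets coming from $\Delta(D_0)$ are exactly those containing $\overrightarrow{vx}$ and hence \emph{no} edge $\overrightarrow{y_ix}$ (your description of the two families is swapped), and the cross-block witness should be $H_k\cup\{\overrightarrow{vx}\}$ for a facet $H_k$ of $\Delta(D_0)$ with $G^p_m\subseteq H_k$, since $G^p_m\cup\{\overrightarrow{vx}\}$ itself need not be maximal --- precisely the issue you flag at the end, and the paper resolves it this way (the equality $F_l\cap F_j=F_j\setminus\{\overrightarrow{y_px}\}$ still holds because $H_k$ contains no edge into $x$).
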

\begin{proof}$ $

\begin{itemize}
\item[(a)] This is obvious, because $\Delta(D)$ is a cone over
$\Delta(D')$ with apex $\overrightarrow{xv}$. Therefore, we have
$\mathcal{R}_D(F_i\cup\{\overrightarrow{xv}\})
    =\mathcal{R}_{D'}(F_i)$ and $\Delta(D)$ is contractible.
    \item[(b)] If a facet $F$ of $\Delta (D)$ contains
    $\overrightarrow{vx}$, then $F$ does not contain any of edges
    $\{\overrightarrow{y_1x},\overrightarrow{y_2x},
\ldots, \overrightarrow{y_kx}\}$. So, in that case we have that
$F=H\cup \{\overrightarrow{vx}\}$, for a facet $H$ of
$\Delta(D_0)$. If a facet $F'$ of $\Delta (D)$ does not contain
    $\overrightarrow{vx}$, then $F'$ must contain exactly one of
  the edges $\{\overrightarrow{y_1x},\overrightarrow{y_2x},
\ldots, \overrightarrow{y_kx}\}$. Therefore, we have that
$F'=G\cup \{\overrightarrow{y_px}\}$ for a facet $G$ of $D_p$.

The supposed shelling of $\Delta(D_0)$ provides that for
$i\leqslant j$ and facets $H_i, H_j$ of $\Delta(D_0)$ there exists
$k\leqslant j$ and $\overrightarrow{wz}\in H_j$ such that
$$(H_i\cup\{\overrightarrow{vx}\})
\cap(H_j\cup\{\overrightarrow{vx}\})
\subseteq(H_k\cup\{\overrightarrow{vx}\})
\cap(H_j\cup\{\overrightarrow{vx}\}) = H_j
\cup\{\overrightarrow{vx}\}\setminus\{\overrightarrow{wz}\}.$$
    Note that for any $p$ such that $1\leqslant p\leqslant s$ and
    for any facet $G^p_j$ of $\Delta(D_p)$
    there exists a facet $H_k$ of $\Delta(D_0)$
    such that $G^p_j\subseteq
    H_k$. Therefore, for any facet $H_i$ of $\Delta(D_0)$
    we have
    $$(H_i\cup\{\overrightarrow{vx}\})\cap
     (G^p_j\cup\{\overrightarrow{y_px}\})\subseteq
     (H_k\cup\{\overrightarrow{vx}\})\cap
     (G^p_j\cup\{\overrightarrow{y_px}\})=G^p_j.$$
     Also, for $q\leqslant p$ and a facet $G^q_r$ of
     $\Delta(D_q)$ we have
    $$(G^q_r\cup\{\overrightarrow{y_qx}\})\cap
     (G^p_j\cup\{\overrightarrow{y_px}\})\subseteq
     (H_k\cup\{\overrightarrow{vx}\})\cap
     (G^p_j\cup\{\overrightarrow{y_px}\})=G^p_j.$$
So, we obtain that the order defined in (b) is a shelling order
for $\Delta(D)$. In this order we have that the restriction of the
facets of $\Delta(D)$ is
$$\mathcal{R}_D(H_i\cup\{\overrightarrow{vx}\})=
\mathcal{R}_{D_0}(H_i)\textrm{ and }
\mathcal{R}_D(G^p_i\cup\{\overrightarrow{y_px}\})=
\mathcal{R}_{D_p}(G^p_i)\cup \{\overrightarrow{y_px}\}.$$
    \item[(c)]

In this case a facet of $\Delta(D)$ has the form
$$\{\overrightarrow{xv}\}\cup F\textrm{, for a facet }F \textrm{ of }
\Delta(D')\textrm{ or } \{\overrightarrow{vx}\}\cup H\textrm{, for
a facet }H \textrm{ of }\Delta(D_0).$$

    Again, for a facet $H_j$ of $\Delta(D_0)$ there exists a facet $F_i$ of
    $\Delta(D')$ such that
    $H_j\subseteq F_i$. In the similar manner as in (b) we can
    prove that the considered order is a shelling order. Further,
     the restriction in this order is
     $$\mathcal{R}_D(F_i\cup\{\overrightarrow{xv}\})
    =\mathcal{R}_{D'}(F_i) \textrm{ and }
     \mathcal{R}_D(H_i\cup\{\overrightarrow{vx}\})=
\mathcal{R}_{D_0}(H_i)\cup \{\overrightarrow{vx}\}.$$
\end{itemize}
\end{proof}
\begin{rem}\label{R:generating}
Now, we can identify generating facets of $\Delta(D)$.
    If $\overrightarrow{xv}\in E(D)$ and
    $\overrightarrow{vx}\notin E(D)$, then $\Delta(D)$ is
    contractible.
  If $\overrightarrow{xv}\notin E(D)$ and
    $\overrightarrow{vx}\in E(D)$, let $\mathcal{G}_p$ denote a
    set
    of generating faces of $\Delta(D_p)$ for $p=1,2,\ldots,s$. Then,
    a generating set of facets of $\Delta(D)$
    is
    $$\bigcup_{p=1}^s
    \{G\cup\{\overrightarrow{y_px}\}:G\in \mathcal{G}_p\}.$$
If $\overrightarrow{xv},\overrightarrow{vx}\in
     E(D)$, then a set of generating facets of $\Delta(D)$ is
     $$\{H\cup\{\overrightarrow{vx}\}: H \textrm{ is a generating facet
     of
     }\Delta(D_0)\}.$$

\end{rem}
A \emph{directed acyclic graph} is a directed graph without
directed cycles. By successive applications of Proposition
\ref{P:recursiveshelling} and Remark \ref{R:generating} we obtain
the following result of A. Engstr\" om.
\begin{thm}[Theorem 2.10,  \cite{Eng}]
If $D$ is a directed acyclic graph, then $\Delta(G)$ is homotopy
equivalent to a wedge of $\prod_{v\in V(G)\setminus R}(d^-(v) -
1)$ spheres of dimension $|V (G)|-|R|- 1$, where $R$ is the set of
vertices without edges directed to them.
\end{thm}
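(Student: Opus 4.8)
The plan is to obtain this theorem as a consequence of the recursive shelling machinery established in Proposition~\ref{P:recursiveshelling} together with the identification of generating facets in Remark~\ref{R:generating}. First I would argue that the homotopy/counting statement reduces to the case of a connected directed acyclic graph: if $D$ decomposes into connected components $D^{(1)},\ldots,D^{(m)}$, then $\Delta(D) = \Delta(D^{(1)}) * \cdots * \Delta(D^{(m)})$, the join of the component complexes, since a directed forest in $D$ is just a disjoint union of directed forests, one in each component. Because the join of wedges of spheres is again a wedge of spheres with the number of top-dimensional spheres multiplying and the dimensions adding (with the convention that a contractible complex contributes a trivial factor), and because both $\prod_{v \notin R}(d^-(v)-1)$ and $|V|-|R|-1$ behave additively/multiplicatively across components, it suffices to treat a single connected DAG.

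For a connected DAG $D$, I would proceed by induction on $|V(D)|$, peeling off a leaf $v$ as in the setup preceding Proposition~\ref{P:recursiveshelling}; note that a DAG always has a leaf (e.g.\ take a vertex of maximal distance from a source in the underlying tree is not available here, but any finite DAG has a vertex that is a sink or source of degree one in the underlying graph — more carefully, since $D$ is essentially a tree in the relevant acyclic cases... actually for a general DAG one peels a vertex of the underlying graph that is a leaf, which exists since the underlying graph, being connected on $\geq 2$ vertices, has a leaf unless it has a cycle; but the underlying graph may have cycles). Here I would instead invoke the structure directly: pick a leaf $v$ of the underlying \emph{graph} of $D$ — this requires the underlying graph to be a tree, which holds precisely when $D$ is essentially a tree, so for a \emph{general} DAG one must first reduce, and I would handle the general DAG by noting that the theorem of Engstr\"om as quoted can itself be proved by the same leaf-peeling once one allows the underlying graph to have cycles, peeling any vertex $v$ whose removal keeps $D$ connected-or-recovers a DAG; since the excerpt says the result follows "by successive applications" I will take the leaf-peeling to be on a vertex $v$ with a unique neighbor $x$ in the underlying graph. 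Apply case (a), (b), or (c) of Proposition~\ref{P:recursiveshelling} according to which of $\overrightarrow{xv},\overrightarrow{vx}$ lie in $E(D)$. Acyclicity forbids both lying in $E(D)$ simultaneously in the DAG setting, so only cases (a) and (b) occur.

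In case (a), $\overrightarrow{xv}\in E(D)$ only: then $v$ has in-degree $1$, so $v\notin R$ contributes a factor $d^-(v)-1 = 0$, forcing the product to be $0$; correspondingly Proposition~\ref{P:recursiveshelling}(a) says $\Delta(D)$ is a cone, hence contractible, a wedge of $0$ spheres — consistent. In case (b), $\overrightarrow{vx}\in E(D)$ only: then $v$ is a source of $D'$ (it is a source since its only edge points to $x$), and $v\in R$ while the edges $\overrightarrow{y_1x},\ldots,\overrightarrow{y_kx}$ into $x$ in $D'$ number $k = d^-_{D'}(x) = d^-_D(x) - [\overrightarrow{vx}\in E(D)] = d^-_D(x)-1$. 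By the formula in (b), $h_{i,i}(\Delta(D)) = h_{i-1,i}(\Delta(D_0)) + \sum_{p=1}^k h_{i-1,i-1}(\Delta(D_p))$; since $D_0$ has $x$ with in-degree $0$ among former $y_j$'s pruned... I would then check that $\Delta(D_0)$ is a cone (with apex an edge out of the now-source $x$, when $s\geq 1$), hence contributes $0$, while each $\Delta(D_p)$ has the same vertex set as $D_0$ but with in-degree of $x$ restored... — here the bookkeeping is: $D_0$ and each $D_p$ are DAGs on $|V(D)|-1$ vertices, their "$R$" sets and in-degrees are computed by the inductive hypothesis, and one verifies $\sum_{p=1}^k \prod_{w\notin R(D_p)}(d^-_{D_p}(w)-1) = (k) \cdot \prod = (d^-_D(x)-1)\prod_{w\in V(D)\setminus(R\cup\{v\}),\,w\neq x}(d^-_D(w)-1)$, which is exactly $\prod_{w\in V(D)\setminus R}(d^-_D(w)-1)$ once the $x$-factor $d^-_D(x)-1$ is pulled out and the $v$-factor is absent because $v\in R$. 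The dimension bookkeeping: facets of $\Delta(D)$ have size $|V(D)| - |R|$ (forests with $|R|$ roots), so top dimension is $|V(D)|-|R|-1$, matching. The main obstacle I anticipate is precisely this inductive bookkeeping in case (b) — correctly tracking how $R$, the in-degrees, and the "which $D_p$ is a cone" status change under leaf removal, and confirming that the $h_{i,i}$ recursion telescopes to the asserted product — rather than any topological subtlety, since shellability and the wedge-of-spheres conclusion come for free from Proposition~\ref{P:recursiveshelling} and the interpretation of $h_{j,j}$ recalled in the introduction.
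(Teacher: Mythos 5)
Your component reduction and your analysis of the individual peeling steps are essentially sound: case (c) of Proposition~\ref{P:recursiveshelling} cannot occur in a DAG, case (a) produces a cone matching the zero factor $d^-(v)-1=0$, and in case (b) acyclicity forces $s=0$, so $D_p=D_0$ for all $p$ and $h_{i,i}(\Delta(D))=k\cdot h_{i-1,i-1}(\Delta(D_0))$ with $k=d^-(x)-1$ and $R(D_0)=(R(D)\setminus\{v\})\cup\{x\}$, which makes the product and the dimension telescope. (Two small points: you do not need $\Delta(D_0)$ to be a cone, since $h_{i-1,i}=0$ holds trivially because a restriction is contained in its facet; and your parenthetical about $D_p$ ``restoring the in-degree of $x$'' is off, since here $D_p=D_0$.) The genuine gap is the one you flagged and then assumed away: Proposition~\ref{P:recursiveshelling} requires a vertex $v$ with a \emph{unique} neighbour in the underlying undirected graph, and a connected DAG need not have one at any stage. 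For example, $V=\{a,b,c,d\}$ with edges $\overrightarrow{ac},\overrightarrow{ad},\overrightarrow{bc},\overrightarrow{bd}$ is acyclic and every vertex has underlying degree $2$, so no peeling step is available, yet the theorem applies ($R=\{a,b\}$, product $=1$, $\Delta(D)\simeq S^1$). Declaring that you ``will take the leaf-peeling to be on a vertex with a unique neighbour'' is assuming exactly the hypothesis you lack; the leaf-peeling route as written only proves the theorem for DAGs that are essentially trees. (To be fair, the paper itself offers nothing beyond the one-line assertion that the result follows ``by successive applications'' of the proposition and the remark, so it shares this imprecision.)

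The missing idea that handles a general DAG is a join decomposition by the heads of edges rather than by leaves. In a DAG, a set $F\subseteq E(D)$ is a directed forest if and only if every vertex has in-degree at most $1$ in $F$: any underlying cycle of $F$ all of whose vertices have in-degree at most $1$ within the cycle would have to be a directed cycle, which acyclicity forbids. Hence
$$\Delta(D)=\mathop{*}_{v\in V(D)\setminus R}\left\{\overrightarrow{uv}:\overrightarrow{uv}\in E(D)\right\},$$
a join of $|V(D)|-|R|$ discrete sets of sizes $d^-(v)$. A discrete set of $m$ points is a wedge of $m-1$ copies of $S^0$, and joining wedges of spheres multiplies the numbers of spheres while the dimensions add (plus one per join), so $\Delta(D)$ is a wedge of $\prod_{v\notin R}(d^-(v)-1)$ spheres of dimension $|V(D)|-|R|-1$. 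This argument subsumes your component reduction and your cases (a) and (b), and is the step you need to make the induction unnecessary.
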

\noindent Now, we investigate homotopy type of $\Delta(D)$ when
$D$ is a double directed tree.
\begin{defn}
A tree $T$ with $2n$ vertices ($n$ leaves and $n$ non-leaves) such
that every non-leaf is adjacent to exactly one leaf we call
\emph{basic tree}. Also, we say that a tree with exactly two
vertices is a basic tree. We say that the edge connecting a
non-leaf and a leaf is peripheral.
\end{defn}
 We can produce a basic tree if we start with an arbitrary
tree $T'$ and add a leaf to each node of $T'$. We use description
of generating facets from Remark \ref{R:generating} to obtain the
following proposition.
\begin{prop}
Let $D$ be a directed tree with $2n$ vertices obtained from a
basic tree $T$ by replacing every edge of $T$ by a pair of
directed edges going in opposite directions. Then we have that
$\Delta(D)\cong \mathbb{S}^{n-1}$.
\end{prop}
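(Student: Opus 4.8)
The plan is to induct on $n$, the number of leaves of the basic tree $T$, using the recursive shelling machinery of Proposition~\ref{P:recursiveshelling} together with the description of generating facets in Remark~\ref{R:generating}. The base case $n=1$ is the tree with two vertices $x,v$ and both directed edges $\overrightarrow{xv},\overrightarrow{vx}$; here $\Delta(D)$ has exactly the two vertices $\overrightarrow{xv}$ and $\overrightarrow{vx}$ and no edge between them (two opposite edges form a directed cycle, not a forest), so $\Delta(D)\cong\mathbb{S}^0$, as required.

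For the inductive step, I would peel off a peripheral edge. Pick a non-leaf $x$ of $T$ that is adjacent to a leaf $w$ of $T'$ (the underlying ``inner'' tree) in such a way that $x$ together with its attached leaf forms an outermost pendant structure; more precisely, choose the leaf $v$ of $D$ (a leaf of the underlying basic tree) whose unique neighbour $x$ is a non-leaf, and such that after deleting $v$ the vertex $x$ becomes a leaf of the remaining basic-tree-like structure. Since $D$ is double-directed, we are in case (c) of Proposition~\ref{P:recursiveshelling}: both $\overrightarrow{xv}$ and $\overrightarrow{vx}$ lie in $E(D)$. Then $D'=D\setminus\{v\}$ is again essentially a (smaller) tree, and $D_0=D'\setminus\{\overrightarrow{y_ix}\}$ removes all edges into $x$ coming from the remaining neighbours $y_1,\dots,y_k$ of $x$; because $x$ was chosen so that its only other $D'$-neighbour $y_1$ is itself the ``inner'' attachment point, one checks that $D_0$ is (essentially) a basic tree on $2(n-1)$ vertices, while $D'$ deformation retracts / has the homotopy type controlled by the previous step. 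Applying case (c),
\[
h_{i,j}(\Delta(D))=h_{i-1,j}(\Delta(D'))+h_{i-1,j-1}(\Delta(D_0)),
\]
and by Remark~\ref{R:generating} the set of generating facets of $\Delta(D)$ is exactly $\{H\cup\{\overrightarrow{vx}\}:H\text{ a generating facet of }\Delta(D_0)\}$. By the inductive hypothesis $\Delta(D_0)\cong\mathbb{S}^{n-2}$ has a single generating facet, hence $\Delta(D)$ has a single generating facet as well, so it is homotopy equivalent to a single sphere; its dimension is one more than that of $\Delta(D_0)$, i.e.\ $(n-2)+1=n-1$.

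The main obstacle is bookkeeping which graph among $D',D_0,D_p$ is again (essentially) a basic tree so that the induction actually closes: $D'$ need not be a \emph{basic} tree in the strict sense (deleting a leaf can leave a non-leaf with no attached leaf), so I would either (i) formulate the induction over a slightly larger class — directed doublings of trees together with the tally of generating facets coming from Engstr\"om's formula $\prod_{v}(d^-(v)-1)$ — and observe that for a basic double tree this product telescopes to $1$ at every stage, or (ii) argue that in case (c) the term $h_{i-1,j}(\Delta(D'))$ contributes nothing to the top generating class because $\Delta(D')$ is contractible (its defining product contains a factor $d^-(\cdot)-1=0$ coming from a vertex of in-degree $1$ created by the deletion), so only the $\Delta(D_0)$ term survives and carries the single sphere. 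Route (ii) is cleaner: it reduces the whole argument to checking that $\Delta(D_0)$ is again the complex of a basic double tree with one fewer leaf and that $\Delta(D')$ is contractible, both of which are immediate from the structure of $T$.
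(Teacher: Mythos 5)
Your high-level strategy (peel off a pendant leaf--nonleaf pair and invoke case (c) of Proposition~\ref{P:recursiveshelling} together with Remark~\ref{R:generating}) is the same as the paper's, and your base case is correct. The inductive step, however, has a genuine gap: the graph $D_0$ is \emph{not} essentially a basic tree on $2(n-1)$ vertices. By construction $D_0=(D\setminus\{v\})\setminus\{\overrightarrow{y_1x},\ldots,\overrightarrow{y_kx}\}$ still contains the vertex $x$, now with in-degree $0$ and the single outgoing edge $\overrightarrow{xy_1}$; it has $2n-1$ vertices and is not double directed at $x$, so the inductive hypothesis cannot be applied to $\Delta(D_0)$. (Concretely, for the double directed path on $v_1,u_1,u_2,v_2$ with $v=v_1$, $x=u_1$, the complex $\Delta(D_0)$ is an edge together with an isolated vertex --- homotopy equivalent to $\mathbb{S}^0$, but not the complex of the $2$-vertex basic tree, and its unique generating facet $\{\overrightarrow{v_2u_2}\}$ is produced by a further case (b) reduction, not by the inductive hypothesis.) To close the induction you must continue peeling: apply case (b) of Remark~\ref{R:generating} to the dangling vertex $x$ of $D_0$, which forces an edge $\overrightarrow{zy_1}$ with $z\neq x$ into every generating facet, and then argue (using that in a basic tree every non-leaf is adjacent to exactly one leaf) that the forced choices propagate until the whole peripheral matching is forced and one finally lands on a strictly smaller basic tree. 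This multi-step ``successive application'' is exactly what the paper's proof does, and it is the part your one-step reduction skips.

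A secondary problem: both of your proposed repairs lean on Engstr\"om's product formula $\prod_v(d^-(v)-1)$, which holds only for directed \emph{acyclic} graphs; $D'$ and $D_0$ are full of $2$-cycles, so that formula gives you neither the contractibility of $\Delta(D')$ nor a tally of generating facets here. Fortunately the contractibility of $\Delta(D')$ is not needed at all: case (c) of Remark~\ref{R:generating} already says that every generating facet of $\Delta(D)$ has the form $H\cup\{\overrightarrow{vx}\}$ with $H$ a generating facet of $\Delta(D_0)$, so the $\Delta(D')$ block of the shelling contributes nothing to the generating set regardless of its homotopy type. The only thing that needs proving is the structure of the generating facets of $\Delta(D_0)$, which is precisely where the gap above sits.
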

\begin{proof}
Assume that $v_1,v_2,\ldots,v_n$ are leaves of $T$. We label the
rest of the vertices of $T$ with $u_1,u_2,\ldots,u_n$ so that
$v_iu_i \in E(T)$ for all $i=1,2,\ldots,n$. By applying Remark
\ref{R:generating} successively we obtain that the set of
peripheral edges $\{\overrightarrow{v_iu_i}:i=1,2,\ldots,n\}$ is
the unique generating facet of $\Delta(D)$.

\end{proof}
We denote the unique generating facet for a basic tree $T$ by
$G_T$, that is,
$G_T=\{\overrightarrow{v_1u_1},\overrightarrow{v_2u_2},
\ldots,\overrightarrow{v_nu_n}\}$.

Let $D$ be a double directed tree obtained from a tree $T$. We
describe a bijection between generating simplices of $\Delta(D)$
and decompositions of $T$ into basic trees.

 Let $v_1,v_2,\ldots,v_n$ be a fixed linear order of
$V(D)$ and
 choose the first leaf $v\in
V(D)$ in this order. Assume that $N(v)=\{x\}$ and
$N(x)=\{v,y_1,\ldots,y_k\}$. From (c) of Remark \ref{R:generating}
we know that all generating facets of $\Delta(D)$ have to contain
the edge $\overrightarrow{vx}$. Next, we are looking for
generating facets of complex $\Delta(D_0)$ where
$D_0=(D\setminus\{v\})\setminus\{\overrightarrow{y_1v},
\ldots,\overrightarrow{y_kv}\}$. From (b) of Proposition
\ref{P:recursiveshelling} we have that a generating facet of
$\Delta(D_0)$ must contain edges $\overrightarrow{z_1y_1},
\overrightarrow{z_2y_2},\ldots, \overrightarrow{z_ky_k}$ where
$z_i\in N(y_i)$ and $z_i\neq x$. If $deg_T(y_i)=2$ for all
$i=1,2,\ldots,k$, we consider a subtree of $T$ spanned by
$\{v,x,y_1,z_1,\ldots,y_k,z_k\}$. In the case when
$N_T(y_i)=\{x,z_i,u_1,\ldots,u_r\}$, a generating facet of
$\Delta(D) $ that contains
$\{\overrightarrow{vx},\overrightarrow{z_1y_1},\ldots,
\overrightarrow{z_ky_k}\}$ also contains edges
$\overrightarrow{w_ju_j}$ for $j=1,2,\ldots,r$.

By repeating this procedure we obtain a subtree $B_1$ of $T$ such
that
\begin{enumerate}
    \item[(1)] $B_1$ is a basic tree and $v\in V(B_1)$,
    \item[(2)] for any $x\in V(B_1)$ that is not a leaf in
    $B_1$ we have that $d_{B_1}(x)=d_T(x)$,
    \item[(3)] $|V(B_1)|>2$ whenever $|V(T)|>2$.
\end{enumerate}
Note that there can be more possibilities for a basic tree $B_1$,
see Figure 1. If we can not find a subtree $B_1$ that satisfies
the above conditions, then we obtain that $\Delta(D)$ is
contractible. After we choose a basic tree $B_1$ that satisfies
$(1)$--$(3)$ we proceed in the same way with $T'=T\setminus \{x\in
V(B_1):d_{B_1}(x)=d_T(x)\}$. Note that $T'$ is a forest or a tree.

Let $v'$ be the first leaf of $T'$ and let $T_1$ be the maximal
tree of $T'$ that contains $v'$. Now, we are looking for $B_2$, a
subtree of $T_1$ that satisfies $(1)$--$(3)$. If we can decompose
$T$ into $B_1,B_2,\ldots,B_m$ we say that $(B_1,B_2,\ldots,B_{m})$
is an ordered decomposition of $T$ into $m$ basic trees.

An ordered decomposition $(B_1,B_2,\ldots,B_{m})$ of $T$ that
satisfies $(1)$--$(3)$ produces a generating facet $G_{B_1}\cup
G_{B_2}\cup\cdots\cup G_{B_m}$ of $\Delta (D)$.

  \begin{figure}[h!]\label{Figure}
\begin{center}
\includegraphics[scale=0.23]{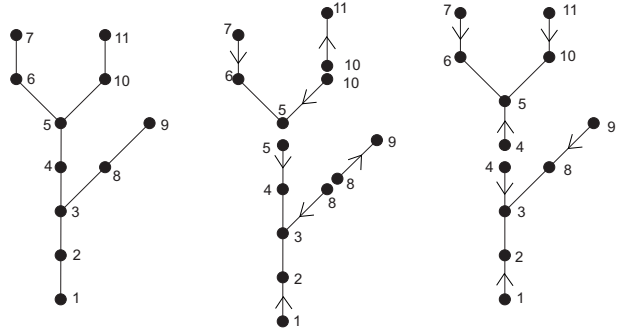}
\caption{\emph{A tree and its decompositions into basic trees.
Oriented edges represent generating facets of $\Delta(D)$}. Note
that $\Delta(D)\simeq \mathbb{S}^5\vee \mathbb{S}^6$}
\label{Sl:prufer}
\end{center}
\end{figure}
\begin{thm} Let $D$ be a double directed tree with $n$ vertices.
Let $\mu_m$ denote the number of ordered decompositions of $D$
into $m$ basic trees. Then we have that
$$\Delta(D)\simeq\bigvee_m\left(\bigvee^{\mu_m}
 \mathbb{S}^{\frac{n+m-3}{2}}\right).$$
\end{thm}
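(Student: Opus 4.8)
The strategy is to run the recursive shelling of Proposition~\ref{P:recursiveshelling} to its logical conclusion and bookkeep the homotopy type via the $h$-triangle. By the preceding discussion and the Theorem of Engstr\"om-style arguments, $\Delta(D)$ is shellable (indeed vertex decomposable), so its homotopy type is a wedge of spheres, one sphere of dimension $j-1$ for each facet $F$ with $\mathcal{R}(F)=F$, i.e.\ for each generating facet. Thus it suffices to show two things: (i) every generating facet has the same cardinality, namely $\frac{n+m-3}{2}$ where $m$ is the number of basic trees in the associated ordered decomposition, yielding spheres of dimension $\frac{n+m-3}{2}$; and (ii) the generating facets are in bijection with ordered decompositions of $T$ into basic trees, so that the number of generating facets giving an $\frac{n+m-3}{2}$-sphere is exactly $\mu_m$. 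Both (i) and (ii) have essentially been set up in the paragraphs preceding the theorem; the proof consists of assembling them cleanly.

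First I would establish the dimension count. If $(B_1,\dots,B_m)$ is an ordered decomposition of $T$ into basic trees, then by the Proposition on basic trees each $\Delta$ restricted to the double-directed $B_i$ contributes the single generating facet $G_{B_i}$ of size $\tfrac{1}{2}|V(B_i)|$ (a basic tree on $2n_i$ vertices has $n_i$ peripheral edges; a basic tree on $2$ vertices has $1$). Since the $B_i$ partition $V(T)$ in the sense that $\sum_i |V(B_i)| = n$ — here one must be slightly careful: the decomposition removes from $T$ only the non-leaf vertices of $B_i$ at each stage, but the construction of $B_{i}$ re-uses the ``attachment'' structure, so the correct accounting is that the vertex sets of the $B_i$, counted with the convention in the construction, sum to $n$ — the associated generating facet $G_{B_1}\cup\cdots\cup G_{B_m}$ has cardinality $\tfrac12\sum_i|V(B_i)| = \tfrac{n}{2}$ when all pieces are ``full'' basic trees, but the two-vertex basic trees contribute $1 = \tfrac{2}{2}$ each, so in general a decomposition into $m$ pieces gives a facet of size $\tfrac{n-m}{2}+m\cdot 0 \dots$; the clean statement is $|G_{B_1}\cup\cdots\cup G_{B_m}| = \tfrac{n+m}{2}-1 = \tfrac{n+m-2}{2}$, hence a sphere of dimension $\tfrac{n+m-2}{2}-1 = \tfrac{n+m-3}{2}$, matching the claim. (In particular $n+m$ must be even, which indeed holds because each basic tree has even order except the $2$-vertex ones which also have even order, so $n\equiv m\pmod 2$ fails — rather, $n$ and $m$ have the opposite... ) I would pin down this parity/arithmetic identity as the first lemma, deriving it directly from $|G_{B_i}| = \tfrac12 |V(B_i)|$ together with $\sum |V(B_i)| = n+m-2+\dots$; this is the one genuinely computational point and I expect it to be the main obstacle, since the recursive removal in the decomposition procedure overlaps vertices in a way that must be tracked exactly.

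Next I would nail down the bijection in (ii). By Remark~\ref{R:generating}, generating facets of $\Delta(D)$ are built recursively: peel off the first leaf $v$ with neighbor $x$; every generating facet contains $\overrightarrow{vx}$ (case (c)); then descend into $\Delta(D_0)$ where case (b) forces edges $\overrightarrow{z_iy_i}$ into every generating facet; iterating this forced-edge analysis exactly reconstructs a basic subtree $B_1$ satisfying conditions $(1)$--$(3)$, and conversely each valid choice of such $B_1$ corresponds to a branch of the recursion. Passing to $T' = T\setminus\{x\in V(B_1): d_{B_1}(x)=d_T(x)\}$ and repeating gives the ordered decomposition. The content is that the recursion tree of Remark~\ref{R:generating} is isomorphic, as a set of leaves, to the set of ordered decompositions — each generating facet is recovered uniquely as $G_{B_1}\cup\cdots\cup G_{B_m}$ and distinct decompositions give distinct facets because the $B_i$ are determined by which edges lie in the facet. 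I would phrase this as: the map $(B_1,\dots,B_m)\mapsto G_{B_1}\cup\cdots\cup G_{B_m}$ is a bijection from ordered decompositions of $T$ into basic trees onto generating facets of $\Delta(D)$, with the inverse given by the peeling procedure described before the theorem.

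Finally I would conclude. Since $\Delta(D)$ is shellable, $\Delta(D)\simeq\bigvee_{F}\mathbb{S}^{|F|-1}$ where $F$ ranges over generating facets (those with $\mathcal{R}(F)=F$). By the bijection, these $F$ are indexed by ordered decompositions; by the dimension lemma, a decomposition into $m$ basic trees yields $|F|-1 = \tfrac{n+m-3}{2}$; and by definition $\mu_m$ counts decompositions into $m$ pieces. Grouping the wedge summands by the value of $m$ gives
$$\Delta(D)\simeq\bigvee_m\left(\bigvee^{\mu_m}\mathbb{S}^{\frac{n+m-3}{2}}\right),$$
as claimed. The main obstacle, as noted, is the exact arithmetic of vertex counts in the decomposition; everything else is a faithful transcription of Proposition~\ref{P:recursiveshelling} and Remark~\ref{R:generating}.
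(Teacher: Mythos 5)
Your overall architecture is exactly the paper's: shellability reduces the homotopy type to a wedge of spheres indexed by generating facets, the generating facets are in bijection with ordered decompositions into basic trees (this is the content of the discussion preceding the theorem, and the paper's proof simply invokes it as you do), and what remains is the dimension count. However, the dimension count — which you correctly single out as the one genuinely computational point — is wrong as written. You propose $\sum_i|V(B_i)|=n$ and conclude $|G_{B_1}\cup\cdots\cup G_{B_m}|=\tfrac{n+m-2}{2}$, and then assert that subtracting $1$ gives $\tfrac{n+m-3}{2}$; but $\tfrac{n+m-2}{2}-1=\tfrac{n+m-4}{2}$, and in fact $\tfrac{n+m-2}{2}$ is not even an integer. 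Your parenthetical parity discussion trails off precisely because the premise is false: the vertex sets of the $B_i$ do \emph{not} partition $V(T)$. A leaf of $B_i$ whose degree in $T$ exceeds its degree in $B_i$ survives into $T'$ and is reused by a later piece, so $\sum_i|V(B_i)|=n+m-1$, not $n$.

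The correct accounting, and the one the paper uses, is by edges rather than vertices: every edge of $T$ incident to a deleted vertex lies in $B_1$, and no edge of $B_1$ survives into $T'$, so $E(T)=E(B_1)\sqcup E(T')$ and inductively the $B_i$ are edge-disjoint with $\bigcup_i E(B_i)=E(T)$. Writing $|V(B_i)|=2s_i$, so that $B_i$ has $2s_i-1$ edges and contributes $s_i$ peripheral edges to the generating facet, this gives $\sum_i(2s_i-1)=n-1$, hence $\sum_i s_i=\tfrac{n+m-1}{2}$ (in particular $n+m$ is odd), and the generating facet has dimension $\sum_i s_i-1=\tfrac{n+m-3}{2}$. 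With that lemma replacing your vertex count, the rest of your argument goes through and coincides with the paper's proof.
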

\begin{proof}We described above a bijection between generating sets of
$\Delta(D)$ and ordered decompositions of $D$ that satisfy
$(1)$--$(3)$. Consider such an ordered partition
$(B_1,B_2,\ldots,B_{m})$ with $m$ basic trees. If a basic tree
$B_i$ contains $2s_i$ vertices (and $2s_i-1$ edges) it contains
$s_i$ edges of a generating set of $\Delta(D)$. Then we have $2
s_1-1+2s_2-1+\cdots+2s_{m}-1=n-1\textrm{, and}$ this decomposition
corresponds with
$$s_1+s_2+\cdots+s_{m}-1=\frac{n+m-1}{2}-1$$
dimensional generating facet of $\Delta(D)$.

\end{proof}
\begin{cor}
Among all double directed trees $D$ with $n$ vertices the biggest
dimension of nontrivial homology is
$\left\lceil\frac{n-2}{2}\right\rceil$. Smallest nontrivial
homology for all trees with $n$ vertices appears in the dimension
$n-\lfloor\frac{n}{3}\rfloor-2$.
\end{cor}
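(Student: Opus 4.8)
The plan is to extract both claims from the structural results already in hand, namely the theorem just before this corollary (computing $\Delta(D)\simeq\bigvee_m\bigvee^{\mu_m}\mathbb{S}^{\frac{n+m-3}{2}}$ for a double directed tree $D$ on $n$ vertices) together with the last theorem of Section~3 (shellability of $\Delta^{(\mathrm{m}_{C_n})}(\overrightarrow{C}_n)$) and Proposition~\ref{P:ostrogonezavisnim}, which tells us where nonzero homology can live. For the first assertion, I would argue that the top nonvanishing homology of $\Delta(D)$ sits in dimension $\frac{n+m-3}{2}$ for the \emph{smallest} admissible $m$, since the exponent is increasing in $m$ — wait, it is increasing, so the top dimension comes from the \emph{largest} $m$ for which $\mu_m>0$. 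The largest number of basic trees in an ordered decomposition of a tree on $n$ vertices is achieved by chopping off basic trees of the minimal size $2$ (a single edge) as often as possible; a short combinatorial argument shows the extreme case is the star, or more simply any tree for which the decomposition uses as many two-vertex pieces as possible, giving $m$ as large as roughly $n/2$. Plugging the maximal $m$ into $\frac{n+m-3}{2}$ and simplifying yields $\left\lceil\frac{n-2}{2}\right\rceil$; I would verify the parity bookkeeping by splitting into $n$ even and $n$ odd, using that $m$ and $n$ must have the same parity (from $2s_1+\cdots+2s_m-m=n-1$, i.e.\ $m\equiv n-1\pmod 2$ — so actually $m$ and $n$ have opposite parity, which I would track carefully).

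For the second assertion I switch to the skeleton results. By Proposition~\ref{P:ostrogonezavisnim}, the minimal facets of $\Delta(\overrightarrow{G})$ for a graph $G$ on $n$ vertices have size $n-r(G)$, so the lowest-dimensional sphere that can possibly appear in the homology of $\Delta(\overrightarrow{G})$ has dimension $n-r(G)-2=\mathrm{m}_G-1$. Among trees on $n$ vertices, $r(G)$ — the maximal size of a strongly independent set — is maximized by a path (this is the combinatorial core of this half), and $r(P_n)=\lfloor n/3\rfloor$, matching the formula $r(C_n)=\lfloor n/3\rfloor$ recalled in the proof of the cycle theorem. I would then invoke the shellability of $\Delta^{(\mathrm{m}_{C_n})}(\overrightarrow{C}_n)$ for $n\equiv 0,1\pmod 3$ (and, in the remaining residue class, the explicit spheres exhibited in that proof) to confirm that homology genuinely is present in dimension $n-\lfloor n/3\rfloor-2$ for the path/cycle, so the bound is attained and not merely an upper estimate.

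The main obstacle, and the step I would spend the most care on, is the two extremal combinatorial claims: that among trees on $n$ vertices the maximal number of basic trees in a valid ordered decomposition is as large as asserted, and dually that $r(T)$ is maximized by the path. Neither is quite immediate from the definitions: for the first, one must check that greedily peeling off single-edge basic trees is always legal under conditions $(1)$–$(3)$ and counts correctly; for the second, one needs that no tree beats the path on strongly independent sets, which I would prove by a leaf-removal induction (deleting a leaf and its neighbor removes at most one vertex from any strongly independent set while decreasing $n$ by $2$, and the star gives the worst case in small cases). Once these two extremal facts are pinned down, both formulas drop out by the substitution and parity computations sketched above, so I would present those computations only in outline.
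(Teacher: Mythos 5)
The paper states this corollary with no proof at all, so the only question is whether your plan would actually deliver it; as written, it would not, because both halves rest on computations that do not close. For the first claim you correctly note that the sphere dimension $\frac{n+m-3}{2}$ from the preceding theorem is increasing in $m$, so the biggest dimension requires the \emph{largest} admissible $m$; but $\frac{n+m-3}{2}=\left\lceil\frac{n-2}{2}\right\rceil$ forces $m=1$ ($n$ even) or $m=2$ ($n$ odd), the \emph{smallest} admissible values, while your proposed $m\approx n/2$ would give dimension roughly $3n/4$. The extremal input is also wrong: condition $(3)$ of the decomposition forbids a two-vertex basic tree unless the current component has exactly two vertices, so greedily peeling off single edges is not legal, and the star is the worst possible example (the complex of a double directed star on at least three vertices is contractible, since the outer leaves have no second neighbour). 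Carried out correctly, the computation gives that every sphere has dimension at least $\left\lceil\frac{n-2}{2}\right\rceil$, attained by basic trees (caterpillars), resp.\ a basic tree plus a pendant edge for odd $n$, while the \emph{maximal} dimension over all trees is $n-\lfloor n/3\rfloor-2$, attained by paths cut into the maximal number of $P_4$-blocks: already $\overrightarrow{P_8}$ decomposes as $(P_4,P_4,P_2)$ with $m=3$ and yields a $4$-sphere, whereas $\left\lceil\frac{8-2}{2}\right\rceil=3$. So the two formulas end up attached to the opposite superlatives from the ones your derivation would support; a correct write-up must either prove that swapped assignment or flag the discrepancy, and yours does neither.

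The second half has independent problems. The extremal claim that the path maximizes $r(T)$ among trees is false: attaching a pendant leaf to every vertex of a path gives a caterpillar whose pendant leaves form a strongly independent set of size $n/2$, far exceeding $\lceil n/3\rceil$ (indeed these caterpillars are exactly the basic trees, and they realize the \emph{other} formula). There is also an off-by-one --- a minimal facet has $n-r(G)$ vertices, hence dimension $n-r(G)-1$, not $n-r(G)-2$ --- and the shellability theorem for $\Delta^{(\mathrm{m}_{C_n})}(\overrightarrow{C}_n)$ concerns skeleta of cycle complexes and says nothing about which generating facets of $\Delta(\overrightarrow{T})$ exist for a tree $T$. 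The argument that actually reaches $n-\lfloor n/3\rfloor-2$ stays entirely inside the decomposition theorem: one must bound the largest $m$ admissible under conditions $(1)$--$(3)$ over all trees on $n$ vertices (the delicate extremal step your plan replaces with the illegal single-edge peeling) and then exhibit paths, or for $n\equiv 0\pmod 3$ a path with one extra pendant vertex, attaining that bound.
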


\bibliographystyle{plain}
\bibliography{mybib}{}

\end{document}